\theoremstyle{definition}
\newtheorem{theorem}{Theorem}[section]
\newtheorem{theoremx}{Theorem}
\numberwithin{equation}{section}
\newtheorem*{theorem*}{Theorem}
\newtheorem{lemma}[theorem]{Lemma}
\newtheorem{proposition}[theorem]{Proposition}
\newtheorem{algorithm}[theorem]{Algorithm}
\newtheorem*{claim*}{Claim}
\theoremstyle{definition}
\newtheorem{definition}[theorem]{Definition}
\newtheorem{example}[theorem]{Example}
\newtheorem{remarks}[theorem]{Remarks}
\newtheoremstyle{TheoremNum}
        {8pt}{8pt}              
        {\upshape}                      
        {}                              
        {\bfseries}                     
        {.}                             
        {.5em}                             
        {\thmname{#1}\thmnote{ \bfseries #3}}
  \theoremstyle{TheoremNum}
\newcommand{\RR}{\mathbb{R}}
\newcommand{\NN}{\mathbb{N}}
\newcommand{\ZZ}{\mathbb{Z}}
\newcommand{\rank}{\operatorname{rank}}
\newcommand{\syz}{\operatorname{syz}}
\renewcommand{\ker}{\operatorname{ker}}
\newcommand{\Ht}{\operatorname{ht}}
\newcommand{\ul}[1]{\underline{#1}}
\newcommand{\ov}[1]{\overline{#1}}
\renewcommand{\a}{\mathfrak{a}}
\renewcommand{\leq}{\leqslant}
\renewcommand{\geq}{\geqslant}
\newcommand{\kk}{\Bbbk}
\newcommand{\om}{\boldsymbol{\omega}}
\DeclareMathOperator{\Tor}{Tor}
\newcommand{\ale}[1]{{\color{red} \sf $\star$ Alessandro: [#1]}}
\newcommand{\f}{\mathfrak{f}}
\newcommand{\x}{\mathsf{x}}
\title[]{Bounds on the number of generators of prime ideals}
\author{Giulio Caviglia}
\address{Department of Mathematics, Purdue University, 150 N. University Street, West Lafayette, IN 47907-2067, USA}
\email{gcavigli@purdue.edu}
\author{Alessandro De Stefani}
\address{Dipartimento di Matematica, Universit{\`a} di Genova, Via Dodecaneso 35, 16146 Genova, Italy}
\email{destefani@dima.unige.it}
\thanks{The work of the first named author was partially supported by a grant from the Simons Foundation (41000748, G.C.)}
\subjclass[2010]{}
\keywords{}
\begin{document}

\begin{abstract}
Let $S$ be a polynomial ring over any field $\kk$, and let $P \subseteq S$ be a non-degenerate homogeneous prime ideal of height $h$. When $\kk$ is algebraically closed, a classical result attributed to Castelnuovo establishes an upper bound on the number of linearly independent quadrics contained in $P$ which only depends on $h$. We significantly extend this result by proving that the number of minimal generators of $P$ in any degree $j$ can be bounded above by an explicit function that only depends on $j$ and $h$. In addition to providing a bound for generators in any degree $j$, not just for quadrics, our techniques allow us to drop the assumption that $\kk$ is algebraically closed. By means of standard techniques, we also obtain analogous upper bounds on higher graded Betti numbers of any radical ideal.
\end{abstract} 

\maketitle

\section{Introduction}

This article deals with the following fundamental question: {\it ``How many minimal generators of a given degree does a prime ideal in a polynomial ring have?''} 


A classical result attributed to Guido Castelnuovo states that, if $P$ is a non-degenerate (i.e., containing no linear form) homogeneous prime ideal of height $h$ in a polynomial ring $S$ over an algebraically closed field $\kk$, then the number of linearly independent quadrics contained in $P$ is at most ${h+1 \choose 2}$. What is relevant to observe is that the bound is independent of the number of variables of $S$, which for the purposes of this paper should be thought of as an unknown integer $n \gg 0$. 
A standard proof of Castelnuovo's theorem involves a repeated application of Bertini's theorem to guarantee that a general hyperplane section of the variety defined by $P$ is still non-degenerate, irreducible and reduced. In algebraic terms, going modulo a general linear form and saturating yields an ideal $\ov{P}$ in a polynomial ring in one less variable which is still prime, and does not contain any linear forms. For the latter, the assumption that $\kk$ is algebraically closed is crucial. 

Castelnuovo's estimate supports a more general philosophy which suggests that, under some reasonable geometric assumptions, prime ideals are expected to have better behavior than all other ideals. We point out, for instance, that this bound is completely false for general ideals, even radical; for instance the height-one ideal $(x_1x_j \mid j=2,\ldots,n) \subseteq \kk[x_1,\ldots,x_n]$ is radical, but it contains $n-1$ linearly independent quadrics. 

Castelnuovo's theorem resembles, in its spirit, two fundamental problems involving numerical invariants, both of which have recently been settled: the Eisenbud-Goto conjecture and Stillman's conjecture. The Eisenbud-Goto conjecture \cite{EG} states that, in the same geometric assumptions of Castelnuovo's theorem, the sum of the regularity of $S/P$ and the height of $P$ can be bounded above by the multiplicity of $S/P$. 
Recently, McCullough and Peeva \cite{McP} constructed a family of examples, based on two new constructions called step-by-step homogenization and Rees-like algebras and a series of examples due to Mayr-Meyer, which fail spectacularly in satisfying the bound predicted by the Eisenbud-Goto conjecture. In fact, they show that  the regularity of non-degenerate homogeneous prime ideals cannot be bounded by any polynomial function of the multiplicity. See also \cite{CCMcPV} for some further developments in this direction. While the conjecture is still open in several important cases, such as when $P$ defines a smooth projective variety, the striking result of McCullough and Peeva has put prime ideals under a completely new perspective.

Stillman's conjecture, on the other hand, states 
that if $I \subseteq S$ is a homogeneous ideal generated by $t$ forms of degrees $d_1,\ldots,d_t$, then the length of a graded minimal free resolution of $I$ over $S$ can be bounded above by a constant which only depends on $d_1,\ldots,d_t$ (see \cite{PS}). This conjecture has recently been proved in full generality by Ananyan and Hochster \cite{AH}, with the use of a fundamental and extremely useful new notion, that they call strength. After them, several other authors have been able to provide a proof utilizing various techniques; for instance, see \cite{ESS}. 
The known upper bounds either come from finiteness conditions related to Noetherianity and, as such, are not explicit (as in \cite{AH} and \cite{ESS}), or are expressed as huge towers of exponentials, with several layers. Even if the estimates are either not explicit or typically far from being optimal, the remarkable fact is that they exist in the first place.

The solution of Stillman's conjecture implies uniform upper bounds for several other invariants, including the Castelnuovo-Mumford regularity. Castelnuovo's theorem resembles Stillman's conjecture in what it provides a bound on a given invariant, the number of minimal quadratic generators of a non-degenerate prime $P \subseteq \kk[x_1,\ldots,x_n]$, which does not depend on $\kk$ (as long as it is algebraically closed), or on the number of variables $n$ of the ambient ring.

It is well-known that the bound of ${h+1 \choose 2}$ produced by Castelnuovo's theorem fails if $\kk$ is not algebraically closed (for instance, see \cite[Example 5.7]{DMV}, or Example \ref{Ex prime real numbers}). To the best of our knowledge, if no geometric assumptions such as $\kk$ being algebraically closed are involved, then there is no known upper bound for the number of quadratic minimal generators of a prime ideal just in terms of its height. One difficulty is that the usual approach of using Bertini's theorem fails. In fact, as pointed out before, linear forms can be introduced in the process of taking hyperplane sections and saturating the the ideal $P$, and this forces one to account for the number of possibly minimal quadratic generators killed by these linear forms. Such a number could very well depend on $n$. 

Even less seems to be known if one is not merely interested in quadrics, but wants to study the number of minimal homogeneous generators of $P$ of any degree $j \geq 2$. That is, if one wants to provide an upper bound on the $(0,j)$-th graded Betti number $\beta_{0,j}(P)$ just in terms of $j$ and of the height of $P$. Our main result answers precisely this question, with no assumptions on the base field. 

\begin{theoremx}[see Theorem  \ref{THM generators}] \label{THMX A}
Let $S$ be a standard graded polynomial ring over a field $\kk$, and $P \subseteq S$ be a homogeneous prime ideal of height $h$. For every $j \geq 0$ we have that 
\[
\beta_{0,j}(P) \leq h^{2^{j+1}-3}.
\]
\end{theoremx}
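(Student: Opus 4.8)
My plan is to prove the statement by reduction to a general hyperplane section, carefully accounting for the price paid over a non-closed field, and to close the estimate by induction on $j$.

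\textbf{Reductions.} First I would dispose of small degrees by hand: $\beta_{0,0}(P)=0$ because $P$ is proper, and $\beta_{0,1}(P)=\dim_\kk P_1\leq h$, since a prime containing $c$ independent linear forms is, after a linear change of coordinates, extended from a polynomial ring in $n-c$ variables and hence has height $\geq c$. For $j\geq 2$, writing $P=(x_1,\dots,x_c)+P'$ with $P'$ a non-degenerate prime of height $h-c\leq h$ in fewer variables, the minimal free resolution of $S/P$ is the tensor product of the Koszul complex on $x_1,\dots,x_c$ with (the pullback of) a minimal free resolution of $S'/P'$; comparing degree-$j$ first syzygies gives $\beta_{0,j}(P)=\beta_{0,j}(P')$ for $j\geq 2$. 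So I may assume $P$ is non-degenerate.

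\textbf{The hyperplane-section recursion.} Let $\ell$ be a general linear form, $\ov S=S/\ell S$, $\ov P=(P+\ell S)/\ell S$. Since $S/P$ is a domain and $\ell\notin P$, a minimal free resolution of $S/P$ stays minimal after $-\otimes_S\ov S$, so $\beta_{i,j}(P)=\beta_{i,j}(\ov P)$ for all $i,j$; moreover $\Ht(\ov P)=h$ and, as $(P+\ell S)_1=\kk\ell$, the ideal $\ov P$ is again non-degenerate. The crucial point — and the reason Castelnuovo's bound fails over non-closed fields — is that $\ov P$ need not be prime: the Bertini step guaranteeing that the hyperplane section of $V(P)$ stays irreducible and reduced is exactly what breaks down when $\kk$ is not algebraically closed or not perfect. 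From the exact sequence $0\to(S/(P:\ell))(-1)\xrightarrow{\ \ell\ }S/P\to S/(P+\ell S)\to 0$ and its long exact $\Tor$-sequence one extracts, for $j\geq 2$,
\[
\beta_{0,j}(P)\ \leq\ \beta_{0,j}(\ov P)+\beta_{0,j-1}(P:\ell),
\]
and, iterating through general linear forms and saturating whenever the depth drops to zero, a bound of $\beta_{0,j}(P)$ by a sum of $\beta_{0,j-1}$-terms of colon ideals, plus a term $\beta_{0,j}(\ov P_m)\leq\binom{h+j}{j}$ once one reaches a polynomial ring in $h+1$ variables, plus the Betti numbers in degree $j$ of the finite-length modules $H^0_{\m}$ that are born each time one saturates.

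\textbf{The inductive engine and $\Edepth$.} I would run the induction on $j$; the bound is exactly the solution of $e_j=2e_{j-1}+3$, $e_1=1$, which forces the shape $h^{2^{j+1}-3}$. For the step $j-1\to j$, the two quantities to control in the recursion above are (i) the number of hyperplane sections that actually contribute a nonzero correction — governed by an invariant, $\Edepth$, measuring how many general hyperplane sections a module admits before a finite-length local-cohomology part appears — and (ii) the size of each such correction, i.e.\ the $\beta_{0,j-1}$ of the relevant colon ideals and the $\beta_{\leq 1,j}$ of the finite-length defect modules. Both (i) and (ii) are bounded, via the induction hypothesis in degree $j-1$ together with the $\Edepth$ bookkeeping, by $\beta_{0,j-1}$ of primes of height $\leq h$, i.e.\ by $h^{e_{j-1}}$; their product is what produces the doubling $2e_{j-1}$, and the remaining polynomial-in-$h$ slack is the additive $+3$ in the exponent.

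\textbf{The main obstacle.} The hard part is that the finite-length defect $H^0_{\m}(\ov S/\ov P)$ — equivalently the relevant graded piece of $H^1_{\m}(S/P)$ — has length that is \emph{not} bounded in terms of $h$ (Hartshorne--Rao-type modules are arbitrarily large, and Eisenbud--Goto is false), so it cannot be absorbed by a function of $h$ alone. The resolution, and the technical core of the argument, is to make every estimate in the hyperplane-section loop depend only on $h$ and $j$ and never on the number of variables $n$: one must bound the contribution to degree-$j$ generators coming from a finite-length module not by its length but by Betti data of the ambient prime in strictly lower degrees, and one must bound how many times a defect can appear. This repeated ``square, then lose a power of $h$,'' forced by passing twice from degree $j$ to degree $j-1$ in order to control a single hyperplane section over an arbitrary field, is exactly what makes the doubly-exponential shape $h^{2^{j+1}-3}$ the natural output of the method.
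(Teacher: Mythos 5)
Your proposal takes a fundamentally different route from the paper, and that route contains a genuine gap exactly where you yourself flag the difficulty. You propose to iterate general hyperplane sections, saturate when depth drops, and then absorb the contribution of the finite-length defect modules $H^0_{\mathfrak m}(\ov S/\ov P)$ into the induction. But this is precisely the approach the paper's introduction identifies as \emph{failing}: the number of degree-$j$ generators killed by the linear forms that appear upon saturating ``could very well depend on $n$,'' and no mechanism is known to bound that contribution by a function of $h$ and $j$ alone. Your proposed resolution --- bound the defect not by its length but by ``Betti data of the ambient prime in strictly lower degrees'' together with ``$\Edepth$ bookkeeping'' --- is never made concrete, and it is not clear it can be: the defect modules in question are Hartshorne--Rao-type modules whose graded Betti numbers are not controlled by $h$ and $j$, and the number of saturation steps needed before the depth stabilizes is a priori governed by $n$, not by $h$. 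You assert the recursion $e_j = 2e_{j-1}+3$ ``forces the shape $h^{2^{j+1}-3}$,'' but the recursion itself is never derived; there is no lemma establishing that each saturation step costs at most a factor of $h^{e_{j-1}}$. Until that is supplied, the plan is a restatement of the difficulty, not a proof.

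The paper's argument is entirely different and sidesteps hyperplane sections. Its three key ingredients are all absent from your proposal: (1) Lemma~\ref{Lemma Betti ACI}, which converts the count $\beta_{0,j}(P)$ into the count $\beta_{1,D+j}(\mathfrak f + (g))$ for an almost complete intersection $\mathfrak f + (g)$ of height $h$ with $D \gg 0$; (2) the Gr\"obner degeneration with respect to the partial-revlex preorder $\preccurlyeq_h$ (after a general change of coordinates), together with a relative version of Buchberger's algorithm (Proposition~\ref{Prop upper bound}) that controls how many new generators and first syzygies of ${\rm in}_h(\mathfrak a)$ can appear one degree at a time; and (3) the $\Lambda$-construction and the resulting recursion $\lambda_{D+j}(\Gamma) \leq 2\lambda_{D+j-1}(\Gamma_1)^2 + (2h-1)\lambda_{D+j-1}(\Gamma_1)$, which is the actual source of the doubly-exponential bound. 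The doubling in the exponent comes from the quadratic term in this recursion --- counting pairs of least common multiples among the $\kk[x_{h+1},\dots,x_n]$-parts of the initial forms --- not from ``passing twice from degree $j$ to $j-1$'' in a hyperplane-section loop. A further point you would need to address even with a repaired Bertini argument: the paper proves the bound for all unmixed radical ideals, not just primes, and this generality is \emph{needed} for the almost-complete-intersection reduction to be available; restricting attention to primes throughout, as your reductions do, would leave you unable to invoke the paper's Lemma~\ref{Lemma Betti ACI} even if you later tried to graft it on.
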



Our methods rely on a version developed in \cite{CV} of the 
well-known Buchberger's algorithm, which allows a direct computation of 
a Gr{\"o}bner basis with respect to any given weight-order without first refining it to a monomial one (see Algorithm \ref{BAlgorithm}), and Lemma \ref{Lemma reduced GB single elt}, which allows to extract a {\it minimal} Gr{\"o}bner basis from it. Our use of  this algorithm is more theoretical than computational. What is relevant to us is that it yields Proposition \ref{Prop upper bound}, which is the key for the inductive step in the proof of Theorem \ref{THMX A}. Another important observation involved in the proof is the fact that the minimal number of generators of a prime ideal in degree $j$ equals the number of minimal relations of degree $D+j$ of an almost complete intersection of degrees $d_1 \leq d_2 \leq \ldots \leq d_h \ll D$ (see Lemma \ref{Lemma Betti ACI}). It is important to point out that, while the validity of Stillman's conjecture allows to bound the number of minimal relations of an ideal in terms of the degrees of its generators, this observation cannot be applied to our scenario. In fact, no information on the degrees $d_1,\ldots,d_h$ of the almost complete intersection or on its multiplicity can be deduced when constructing it in Lemma \ref{Lemma Betti ACI}.

Theorem \ref{THMX A} exhibits an explicit upper bound of doubly-exponential type. While this estimate seems far from being optimal, especially for large values of $j$ or $h$, a doubly-exponential behavior is inevitable given our methods. Nevertheless, just like for the current known bounds for Stillman's conjecture, the main purpose of our result is to prove that a bound depending only on $j$ and $h$ actually exists.

In the case of quadrics, an {\it ad hoc} analysis of the quantities involved allows us to obtain a significantly more accurate upper bound than the one of Theorem \ref{THMX A}. This estimate, like the one of Castelnuovo's theorem, is quadratic in the height of the prime.

\begin{theoremx}[see Theorem \ref{THM quadrics}] \label{THMX B} Let $S$ be a standard graded polynomial ring over a field $\kk$, and $P \subseteq S$ be a non-degenerate homogeneous prime ideal of height $h$. The number of linearly independent quadrics contained in $P$ is at most $2h^2+h$.
\end{theoremx}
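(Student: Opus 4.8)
The plan is to specialise the argument of Theorem~\ref{THM generators} to the case $j=2$, replacing the crude numerical estimates used there by sharp ones adapted to quadrics. Since $P$ is non-degenerate, the number of linearly independent quadrics contained in $P$ equals $\beta_{0,2}(P)=\dim_\kk P_2$, and the cases $h\le 1$ are immediate: a non-degenerate prime of height $1$ is principal and generated in degree $\ge 2$, hence contains at most one quadric, and $1\le 2\cdot 1^2+1$. So we may assume $h\ge 2$. By Lemma~\ref{Lemma Betti ACI}, $\beta_{0,2}(P)$ equals $\beta_{1,D+2}(J)$, the number of minimal first syzygies in degree $D+2$ of a height-$h$ almost complete intersection $J=(g_1,\dots,g_h,f)$ with $\deg g_i=d_i$, $\deg f=D$ and $d_1\le\dots\le d_h\ll D$. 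Thus the whole problem becomes: bound by $2h^2+h$ the number of minimal relations of $J$ sitting in the single degree $D+2$.

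To do this I would run the weighted version of Buchberger's algorithm from \cite{CV} (Algorithm~\ref{BAlgorithm}) on $g_1,\dots,g_h,f$, pass to a minimal $w$-Gr\"obner basis via Lemma~\ref{Lemma reduced GB single elt}, and read off the degree-$(D+2)$ part of the first syzygies through Proposition~\ref{Prop upper bound}. Applied as it stands, that proposition produces the doubly-exponential bound of Theorem~\ref{THM generators}, which for $j=2$ reads $\beta_{0,2}(P)\le h^{5}$, because it estimates the quantities in play by products and powers of $h$. The refinement is to re-examine that estimate when the only relations that matter sit in degree $D+2$, where just two kinds of contributions survive. First, among the Koszul relations of $g_1,\dots,g_h,f$, the ones among the $g_i$ have degree $d_i+d_j<D+2$, so at most $h$ Koszul relations---those involving $f$---can have degree $D+2$. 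Second, the genuinely new relations of degree $D+2$ are produced by the algorithm out of the linear forms of a bounded collection of auxiliary ideals of height at most $h$ that arise along the way (closely related to the links of $P$); being of height $\le h$, each such ideal contains at most $h$ linearly independent linear forms. If one shows that the number of ``rounds'' of the algorithm that can create a minimal relation of degree exactly $D+2$ is of the order of $2h$, then multiplying by the at most $h$ linear forms available per round and adding the at most $h$ Koszul contributions gives a total of at most $2h\cdot h+h=2h^2+h$.

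The crux, which I expect to be the genuinely delicate point, is this final counting step: one has to verify that the Koszul contributions and the contributions coming from linear forms of the auxiliary ideals are additive---nothing counted twice, and the several auxiliary ideals not interacting in a way that multiplies the count---and that the bookkeeping of how quadrics of $P$, linear forms of the auxiliary ideals and the extra generator $f$ fit together really does cap the total at the quadratic value $2h^2+h$ rather than at something cubic. A secondary but conceptually important point is that the whole argument runs over an arbitrary field: no generic change of coordinates is used, which is what would break down over a finite field, because Gr\"obner-basis computations are insensitive to the base field---and this is exactly what lets the method dispense with the hypothesis that $\kk$ is algebraically closed, which is unavoidable in the classical Bertini-based proof of Castelnuovo's theorem.
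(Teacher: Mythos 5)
You have correctly identified the high-level framework — reduce via Lemma~\ref{Lemma Betti ACI} to bounding $\beta_{1,D+2}$ of an almost complete intersection $\a=(f_1,\ldots,f_h,g)$ of height $h$, then analyze the degree-$(D+2)$ output of Algorithm~\ref{BAlgorithm} together with Lemma~\ref{Lemma reduced GB single elt} and Proposition~\ref{Prop upper bound} — but the actual counting step that produces $2h^2+h$ is missing, and what you substitute for it would not work. The paper's mechanism is not a count of ``Koszul relations of $\a$'' plus ``linear forms of auxiliary ideals along the rounds of the algorithm.'' It is the $\Lambda$-construction of Section~\ref{Section Lambda}: one passes to $\In_h(\a)$ under the $\NN\times\NN^{n-h}$-grading of $S$, writes each initial form $\gamma_r=\ov{\gamma_r}m_r$ with $\ov{\gamma_r}\in\ov S$ and $m_r$ a monomial in the last $n-h$ variables, and controls first syzygies in degree $D+2$ by the dimension of a concrete free $\ov S$-module $\Lambda(\Gamma)$ indexed by least common multiples of the $m_r$ (Proposition~\ref{Prop bound lambda}). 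Theorem~\ref{THM quadrics} then computes $\lambda_{D+2}(\Gamma)$ directly, splitting it as $V\oplus W$ as in Lemma~\ref{Lemma lambda count}: $\dim_\kk V={h+1\choose 2}$ and $\dim_\kk W\leq h^2+{h\choose 2}+h$, which add to exactly $2h^2+h$. Your proposal never introduces this grading or this module, and the ``rounds $\times$ linear forms per round'' heuristic is reverse-engineered to match $2h\cdot h+h$ rather than derived; as you acknowledge yourself, the ``crux'' is left unverified.

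Two smaller but concrete errors. First, the Koszul relations of $\a$ involving $g$ have degree $d_i+D$, not $D+2$, so unless every $d_i=2$ (which is not assumed) they contribute nothing in degree $D+2$; they are not the relevant objects. Second, your claim that the argument ``uses no generic change of coordinates'' and that this is what lets one drop algebraic closure is factually off: the paper explicitly performs a sufficiently general change of coordinates so that Lemma~\ref{Lemma Initial CI} applies (so that $f_1,\ldots,f_h,x_{h+1},\ldots,x_n$ is a regular sequence), and first reduces to $\kk$ infinite by a base-field extension, which does not affect graded Betti numbers. What replaces algebraic closure is that base extension plus a general coordinate change suffice for the Gr\"obner-theoretic argument, not the absence of a coordinate change.
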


Finally, we extend Theorem \ref{THMX A} and produce explicit upper bounds for Betti numbers $\beta_{i,j}(I)$ of any radical ideal $I$ in terms of $i,j$, and the bigheight of $I$ (see Theorem \ref{THM Betti}). In order to achieve this, we use Theorem \ref{THMX A} and the strategies involved in its proof to show that, for {\it any} monomial order $\preccurlyeq$ and any unmixed radical ideal $I$ of height $h$, there is doubly-exponential upper bound for $\beta_{i,j}({\rm in}_{\preccurlyeq}(I))$ only depending on $i,j$ and the height of $I$ (see Proposition \ref{Prop Betti}).


\subsection*{Acknowledgments} We thank David Eisenbud, Mark Green and Hailong Dao for helpful discussions regarding the topics of this paper.

\subsection*{Notation and setup}
Throughout this article, $\kk$ is a field, and $S=\kk[x_1,\ldots,x_n] = \bigoplus_{j \geq 0}S_j$ is a graded polynomial ring over $\kk$, with $\deg(x_i)=1$ for every $i=1,\ldots,n$. We will refer to this as the standard grading on $S$. Given a homogeneous ideal $I \subseteq S$, and a non-negative integer $d$, we let $I_{\leq d}$ be the ideal generated by the homogeneous elements of $I$ of degree at most $d$. Given an integer $1 \leq h \leq n$, which will be either specified or clear from the context, we let $\ov{S}$ denote the subring $\kk[x_1,\ldots,x_h]$ of $S$. If $M = \bigoplus_j M_j$ is a finitely generated graded $S$-module, we let $\beta^S_{i,j}(M) = \dim_\kk(\Tor_i^S(M,\kk)_j)$ be the $(i,j)$-th graded Betti number of $M$ as an $S$-module, and we let $\beta_{i,\leq j}^S(M) = \sum_{t \leq j} \beta_{i,j}^S(M)$. We will drop the superscript when the ring over which we are computing Betti numbers is clear from the context.


\section{General coordinates and relative Buchberger's algorithm} \label{Section Buchberger}
Given $\boldsymbol{\omega} = (\omega_1,\ldots,\omega_n)  \in \NN^n$ and a monomial ${\bf X}^{\bf u} = x_1^{u_1} \cdots x_n^{u_n} \in S$, we let the weight of ${\bf X}^{\bf u}$ to be $\boldsymbol{\omega} \cdot {\bf u} = \sum_{i=1}^n \omega_i u_i$. This naturally induces a 
total preorder on monomials: if ${\bf X}^{\bf u}$ and ${\bf X}^{\bf v}$ are monomials in $S$, then ${\bf X}^{\bf u} \preccurlyeq_{\boldsymbol{\omega}} {\bf X}^{\bf v}$ if and only if $\boldsymbol{\omega} \cdot {\bf u} \leq \boldsymbol{\omega}\cdot {\bf v}$. If $f \in S$ is a polynomial, we can write it uniquely as a sum of monomials, with coefficients in $\kk$. We then define the initial form ${\rm in}_{\boldsymbol{\omega}}(f)$ of $f$ as the sum, with coefficients, of the monomials in the support of $f$ with maximal weight with respect to $\preccurlyeq_{\boldsymbol{\omega}}$. The initial ideal of $I$ with respect to $\preccurlyeq_{\boldsymbol{\omega}}$ is ${\rm in}_{\boldsymbol{\omega}}(I) = ({\rm in}_{\boldsymbol{\omega}}(g)\mid g \in I)$, the ideal generated by the initial forms of elements of $I$. If $g_1,\ldots,g_t$ is any system of generators of $I$, then clearly $({\rm in_{\boldsymbol{\omega}}}(g_i) \mid i=1,\ldots,t) \subseteq {\rm in}_{\boldsymbol{\omega}}(I)$, and it is well-known that the containment can be strict. We say that a collection of homogeneous generators $g_1,\ldots,g_t$ of a given ideal $I$ is a Gr{\"o}bner basis with respect to $\preccurlyeq_{\om}$ if equality holds above. Finally, we say that $g_1,\ldots,g_t$ is a {\it minimal} Gr{\"o}bner basis of $I$ if ${\rm in}_{\boldsymbol{\omega}}(g_1),\ldots,{\rm in}_{\boldsymbol{\omega}}(g_t)$ are a minimal set of generators of ${\rm in}_{\boldsymbol{\omega}}(I)$.

\subsection{Minimal Gr{\"o}bner bases for weight preorders}
We now focus on how to compute a minimal Gr{\"o}bner basis of a given homogeneous ideal $I \subseteq S$ with respect to a given weight. If $\preccurlyeq$ is a monomial order (hence ${\rm in}_{\preccurlyeq}(f)$ is a monomial for every $f \in S$), then a standard way to compute a Gr{\"o}bner basis of $I$ with respect to $\preccurlyeq$ is the well-known Buchberger's algorithm. This process involves the computation of the so-called S-pairs, which are obtained from syzygies between initial forms with respect to $\preccurlyeq$ of pairs of elements of $I$, and the calculation of remainders of certain divisions. 
In a finite number of steps, the Buchberger's algorithm produces a Gr{\"o}bner basis of $I$ with respect to $\preccurlyeq$.

If $\om \in \NN^n$ is a weight, the Buchberger's algorithm as stated could fail to produce a Gr{\"o}bner basis of $I$ with respect to $\preccurlyeq_{\om}$. In \cite[Section 4]{CV}, the first author and Varbaro produce a variant of Buchberger's algorithm which works for a weight order as well without refining it to a monomial order. 
In \cite{CV} the authors point out that they are more interested in the theoretical aspects of the algorithm they describe, rather than in the computational ones. The same is true for us. However, for our purposes, we need to revise \cite[Algorithm 4.2]{CV} and show how to obtain a {\it minimal} Gr{\"o}bner basis from it. We start by recalling some notation used in \cite{CV} to describe the algorithm.

Let $\widetilde{S} = S[y]$, and let $\boldsymbol{\omega} = (\omega_1,\ldots,\omega_n) \in \NN^n$ be a weight. We give bi-degrees $\deg(x_i) = (1,\omega_i)$ and $\deg(y) = (0,1)$ to the variables of $\widetilde{S}$. Given a non-zero polynomial $g = \sum_{\bf{u}} c_{\bf u} {\bf X^{\bf u}} \in S$ with $c_{\bf u} \in \kk$, let $d = \max\{\boldsymbol{\omega} \cdot {\bf u} \mid c_{\bf u} \ne 0\}$ be the largest weight of a monomial in its support. We let $\widetilde{g} = y^d \sum_{{\bf u}} c_{\bf u} y^{-\boldsymbol{\omega} \cdot {\bf u}} {\bf X}^{\bf u}$ be its homogenization in $\widetilde{S}$. Given $f \in \widetilde{S}$, we let $\ul{f} \in S$ be its evaluation at $y=0$, and given a non-zero element $f \in \widetilde{S}$, we let $\deg_{\ul{x}}(f)$ be its total degree in the variables $x_1,\ldots,x_n$. If $g \in S$ has degree $d$, then $\deg_{\ul{x}}(\widetilde{g}) = d$, and it coincides with the degree of $\ul{\widetilde{g}} \in S$. 
If $I \subseteq S$ is an ideal, we let $\widetilde{I} = (\widetilde{g} \mid g \in I)$ be the ideal of  $\widetilde{S}$ generated by the homogenization of all elements in $I$. On the other hand, given an ideal $J \subseteq \widetilde{S}$, we let $\ul{J}$ be the ideal of $S$ obtained as the image of $J$ under the evaluation map at $y=0$. It is well-known that, given $I \subseteq S$, if we evaluate $\widetilde{I}$ at $y=1$ we get back the ideal $I$. On the other hand, evaluating at $y=0$ one gets the initial ideal of $I$ with respect to $\boldsymbol{\omega}$, that is, $\ul{(\widetilde{I})} = {\rm in}_{\boldsymbol{\omega}}(I)$. For a reference of these facts, see for example \cite[Proposition 8.26]{MS}. Finally, if $I =(g_1,\ldots,g_t) \subseteq S$ is a homogeneous ideal, then 
$\widetilde{I} = (\widetilde{g_1},\ldots,\widetilde{g_t}):y^\infty$.

For the convenience of the reader, we briefly recall \cite[Algorithm 4.2]{CV}, which returns a Gr{\"o}bner basis with respect to any weight order $\om \in \NN^n$.

\begin{algorithm} \label{BAlgorithm}
Let $I \subseteq S$ be a homogeneous ideal, and $g_1,\ldots,g_t$ be a system of homogeneous generators of $I$. Let $J_0 = (\widetilde{g_1},\ldots,\widetilde{g_t})$. 
Given any free presentation of $S/\ul{J_0}$ over $S$, we lift it to a composition of maps
\[
\xymatrix{
\widetilde{S}^{s} \ar[rr]^-\Phi && \widetilde{S}^t \ar[rr]^-{[\widetilde{g_1}, \ldots, \widetilde{g_t}]} &&\widetilde{S}.
}
\]
Observe that the above might not even be a complex. The columns of $\Phi$ represent lifts to $\widetilde{S}$ of syzygies $\sigma_1,\ldots,\sigma_s$ of $\ul{\widetilde{g_1}},\ldots,\ul{\widetilde{g_t}}$. As shown in \cite[Algorithm 4.2]{CV}, we can write $[\widetilde{g_1}, \ldots, \widetilde{g_t}] \circ \Phi(\widetilde{S}^s)$ as the ideal $(y^{a_1} \psi_1,\ldots,y^{a_s} \psi_s)$ for some bi-homogeneous elements $\psi_i \in \widetilde{S} \smallsetminus (y)\widetilde{S}$, with $a_i>0$ for all $i=1,\ldots,s$. We will say that the element $\psi_i$ is obtained by {\it pushing forward} the syzygy $\sigma_i$. Moreover, if we let $Q=(\psi_1,\ldots,\psi_s)$, then either $y$ is a non-zero divisor on $\widetilde{S}/J_0$, in which case $g_1,\ldots,g_t$ was already a Gr{\"o}bner basis of $I$, or $Q \not\subseteq J_0$. In the latter case, set $J_1 = J_0+Q$. 
We will refer to the above process as {\it one iteration} of the algorithm.

Performing more iterations, we obtain an increasing chain of ideals $J_0 \subseteq J_1 \subseteq \ldots \subseteq \widetilde{S}$ which, since $\widetilde{S}$ is Noetherian, must eventually stabilize at $J_m$ for some $m$. As shown in \cite[Algorithm 4.2]{CV} we have that $J_m = \widetilde{I}$, and thus ${\rm in}_{\om}(I) = \ul{J_m}$. By construction, the set of generators of $J_m$ obtained in this process, when evaluated at $y=1$, produces a Gr{\"o}bner basis of $I$. Furthermore, when evaluated at $y=0$, it produces a set of generators of ${\rm in}_\omega(I)$, not necessarily minimal.
\end{algorithm}

Finally, to obtain a minimal Gr{\"o}bner basis from \cite[Algorithm 4.2]{CV}, we will need the following reduction lemma.

\begin{lemma} \label{Lemma reduced GB single elt}
Let $f_1,\ldots,f_t\in \widetilde{S} \smallsetminus (y)\widetilde{S}$ be bi-homogeneous elements, and let $I=(f_1,\ldots,f_t)$. Given a bi-homogeneous element $f \in \widetilde{S}$, there exists a bi-homogeneous element $g \in \widetilde{S}$ such that $(I,f):y^\infty = (I,g):y^\infty$, and either $g = 0$ or $\deg_{\ul{x}}(g) = \deg_{\ul{x}}(f)$ and $\ul{g} \notin \ul{I}$.
\end{lemma}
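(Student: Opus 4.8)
The plan is to mimic the deleted Lemma \ref{Lemma reduced GB} (the commented-out one), but for a single extra element $f$ rather than for an entire generating set. If $\ul{f} \notin \ul{I}$ or $f = 0$, there is nothing to do: take $g = f$, since in that case $\deg_{\ul{x}}(g) = \deg_{\ul{x}}(f)$ holds trivially. So assume $f \neq 0$ and $\ul{f} \in \ul{I}$; the goal is to "push $f$ down" until either it becomes zero or its evaluation at $y=0$ escapes $\ul{I}$, all the while keeping $(I,f):y^\infty$ unchanged.

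First I would record the homogeneity bookkeeping: each $f_i$ and $f$ are bi-homogeneous, $f_i \notin (y)\widetilde S$, and $\deg_{\ul x}(f) = d$ for some $d$. Since $\ul{f} \in \ul{I} = (\ul{f_1}, \ldots, \ul{f_t})$, there is a bi-homogeneous relation $\ul{f} = \sum_{i=1}^t \ul{s_i}\, \ul{f_i}$ in $S$ with each $\ul{s_i}$ homogeneous; lifting the $\ul{s_i}$ to bi-homogeneous $s_i \in \widetilde S$ of the correct bi-degree, the element $f - \sum_i s_i f_i$ lies in $(y)\widetilde S$, hence equals $y^N f'$ for a unique bi-homogeneous $f' \in \widetilde S \smallsetminus (y)\widetilde S$ and some $N \geq 1$ (or else $f - \sum_i s_i f_i = 0$, in which case $f \in I$ and we may take $g = 0$). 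The key observations are: (i) $(I, f):y^\infty = (I, f'):y^\infty$, because $y^N f' = f - \sum_i s_i f_i \in (I,f)$ gives $f' \in (I,f):y^\infty$, while $f = y^N f' + \sum_i s_i f_i \in (I,f')$; and (ii) since $\deg_{\ul x}$ is additive on products and unchanged by multiplication by powers of $y$, we get $\deg_{\ul x}(f') = \deg_{\ul x}(f - \sum s_i f_i) = d$ — here one uses that the homogeneous relation forces $\deg_{\ul x}(s_i f_i) = d$ for every $i$ with $s_i \neq 0$, so no cancellation in $\ul x$-degree can drop it (the subtlety being that $f - \sum s_i f_i$ could still vanish entirely, which is the case already handled). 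Now replace $f$ by $f'$ and repeat.

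To see the process terminates, note that at each step $f \notin (y)\widetilde S$ is replaced by $f'$ with $(I,f) \subsetneq (I,f')$: indeed $f' \in (I,f'):y^\infty \supseteq (I,f):y^\infty$ but $f' \notin (I,f)$ because $f' \notin (y)\widetilde S$ while $f' = y^{-N}(f - \sum s_i f_i)$ would force, if $f' \in (I,f)$, a contradiction with $f'$ being a non-$y$-multiple — more carefully, if $f' \in (I,f)$ then $(I,f') = (I,f)$, yet $\ul{f'}$ would have to lie in $\ul{(I,f)} = (\ul{I}, \ul f) = \ul I$ (as $\ul f \in \ul I$), so the loop invariant $\ul{f'} \in \ul I$ persists and we continue, but the ideals $(I, f_{\text{current}})$ form a strictly ascending chain in the Noetherian ring $\widetilde S$, which cannot go on forever. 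Hence after finitely many steps we reach $g$ with either $g = 0$ or $\ul g \notin \ul I$, and in the latter case $\deg_{\ul x}(g) = d = \deg_{\ul x}(f)$ by the degree invariant, and $(I,g):y^\infty = (I,f):y^\infty$ by transitivity.

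The main obstacle I anticipate is the termination argument: one must be careful that replacing $f$ by $f'$ genuinely enlarges $(I,f)$ rather than potentially cycling, and that the bi-homogeneity is used correctly to guarantee $f' \notin (y)\widetilde S$ and the $\ul x$-degree is preserved exactly (not merely bounded). The clean way to phrase termination is via the strictly increasing chain of ideals $(I,f) \subsetneq (I,f') \subsetneq \cdots$ in $\widetilde S$, exactly as in the termination of Algorithm \ref{BAlgorithm} and of the original Lemma \ref{Lemma reduced GB}; everything else is routine degree bookkeeping.
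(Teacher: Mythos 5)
Your reduction step --- write $\ul f = \sum \ul{s_i}\ul{f_i}$, lift to $\widetilde S$, and set $y^N f' = f - \sum s_i f_i$ --- is exactly the manipulation inside the paper's proof, and the preservation of $\deg_{\ul x}$ and of $(I,\cdot):y^\infty$ are argued correctly. The gap is in termination. You assert that the chain $(I,f)\subseteq(I,f')\subseteq\cdots$ is strictly ascending in the Noetherian ring $\widetilde S$, but this is not true in general. Since $(I,f)$ is a bi-homogeneous ideal and $f'$ is bi-homogeneous with second bi-degree component strictly smaller than that of $f$ (it drops by $N\geq 1$), the only way $f'$ can lie in $(I,f)$ is that $f'\in I$; and then $(I,f')=(I,f)=I$, so the chain stalls. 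This case is not vacuous: it occurs whenever $f\in I$ but your chosen $s_i$ do not witness this over $\widetilde S$, so that $f - \sum s_i f_i \neq 0$ even though $f\in I$. Your ``more carefully'' sentence concedes that $(I,f')=(I,f)$ is possible and that the loop must continue in that case, and then immediately re-asserts strict ascent, which is circular.

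The quantity that actually decreases --- and what the paper's argument is built on --- is the \emph{weight} of $\ul{f_{\mathrm{current}}}$, i.e., the second coordinate of its bi-degree: it drops by exactly $N\geq 1$ at each step and is bounded below, so the iteration terminates. The paper avoids the iteration altogether by choosing $g$ of minimal weight among all admissible candidates and deriving a contradiction from $\ul g\in\ul I$ by exhibiting a candidate of strictly smaller weight --- the same descent, phrased cleanly. Replacing your Noetherianity claim with the observation that the weight of $\ul{f_{\mathrm{current}}}$ strictly decreases makes your argument correct and essentially identical to the paper's. One small unstated point that the paper handles explicitly: the $s_i$ must be chosen so that $\ul{(\widetilde{s_i})} = s_i$ and $s_i\ul{f_i}$ has the same weight as $\ul f$; this is always possible by extracting the top-weight piece of an arbitrary relation $\ul f = \sum a_i\ul{f_i}$.
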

\begin{proof}
If $(I,f):y^\infty = I:y^\infty$ then we can set $g=0$. For the rest of the proof, assume that $(I,f):y^\infty \supsetneq I:y^\infty$. Among all bi-homogeneous elements $g \in \widetilde{S} \smallsetminus (y)\widetilde{S}$ such that $(I,f): y^\infty = (I,g): y^\infty$ and $\deg_{\ul{x}}(g) = \deg_{\ul{x}}(f)$, we choose one such that $\ul{g}$ has minimal weight. Note that our current assumptions guarantee that $g \ne 0$. We claim that $\ul{g} \notin \ul{I}$. In fact, if $\ul{g} \in \ul{I}$, we would be able to find homogeneous elements $s_1,\ldots,s_t \in S$ such that $\ul{g} + \sum_{i=1}^t s_i \ul{f_i} = 0$, $\ul{(\widetilde{s_i})} = s_i$, and either $s_i=0$ or $s_i\ul{f_i}$ has the same weight as $\ul{g}$. Lifting this relation to $\widetilde{S}$, we get that $g+\sum_{i=1}^t s_if_i \in (y)\widetilde{S}$, that is, there would exist a bi-homogeneous element $g' \in \widetilde{S} \smallsetminus (y)\widetilde{S}$ such that $g+\sum_{i=1}^t s_if_i = y^N g'$ for some $N \geq 1$. In particular, note that $\deg_{\ul{x}}(g') = \deg_{\ul{x}}(g) = \deg_{\ul{x}}(f)$, but $\ul{g'}$ has weight strictly smaller than $\ul{g}$. However, the above relation gives that $(I,g'):y^\infty = (I,g):y^\infty = (I,f):y^\infty$, which contradicts our minimal choice for the weight of $\ul{g}$. 
\end{proof}

\begin{definition}
Given a list $\Gamma= \{g_1,\ldots,g_t\}$ of homogeneous polynomials in $S$ of degrees $d_1,\ldots,d_t$, we let 
\[
\syz_S(\Gamma) = \ker \left( \bigoplus_{i=1}^t S(-d_i) \stackrel{[g_1, \ldots, g_t]}{\longrightarrow} S\right)
\]
be the $S$-module of the syzygies of $g_1,\ldots,g_t$. 
\end{definition}

We warn the reader that $\syz_S(\Gamma)$ does not necessarily coincide with the first module of syzygies $\syz_S(J)$ of the ideal $J$ generated by $g_1,\ldots,g_t$, since we are not assuming any minimality conditions on such polynomials. In fact, for every $d \in \ZZ$ we have that $\beta_{1,d}(J) = \beta_{0,d}(\syz_S(J)) \leq \beta_{0,d}(\syz_S(\Gamma))$. 

We now apply Algorithm \ref{BAlgorithm} and Lemma \ref{Lemma reduced GB single elt} to reach the main goal of this section.


\begin{proposition} \label{Prop upper bound} Let $\om \in \NN^n$, and $I \subseteq S$ be a homogeneous ideal. Let $g_1,\ldots,g_t$ be a set of homogeneous generators of $I$ of degrees $d_1 \leq \ldots \leq d_t$, such that $({\rm in}_{\om}(g_i) \mid i=1,\ldots,t) = {\rm in}_{\om}(I)_{\leq d_t}$. 
If we let $\Gamma = \{{\rm in}_{\om}(g_i) \mid i=1,\ldots,t\}$, then 
\[
\max\{\beta_{0,d_t+1}({\rm in}_{\om}(I)),\beta_{1,d_t+1}({\rm in}_{\om}(I))\} \leq \beta_{0,d_t+1}(\syz_S(\Gamma)).
\] 
\end{proposition}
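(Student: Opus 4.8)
The goal is to bound two Betti numbers of $\In_\om(I)$ in degree $d_t+1$ — the number of minimal generators and the number of minimal first syzygies — by the number of degree-$(d_t+1)$ generators of the syzygy module of the list $\Gamma$ of initial forms. The natural strategy is to run a single iteration of Algorithm \ref{BAlgorithm} starting from the generators $g_1,\dots,g_t$ of $I$, keep careful track of degrees, and then invoke Lemma \ref{Lemma reduced GB single elt} to pass to a minimal system. Since the hypothesis $(\In_\om(g_i)\mid i) = \In_\om(I)_{\le d_t}$ says that the $g_i$ already form a Gröbner basis "up to degree $d_t$," every new generator of $\In_\om(I)$ produced by the algorithm, as well as every generator of the syzygy module of $\In_\om(I)$, that lives in degree $d_t+1$ must arise from the push-forwards $\psi_i$ of the lifted syzygies $\sigma_i$ in this first iteration.

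\medskip

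\textbf{Key steps, in order.} First I would set up the one iteration: form $J_0 = (\widetilde{g_1},\dots,\widetilde{g_t}) \subseteq \widetilde S$, take a free presentation $\widetilde S^{\,s}\xrightarrow{\Phi}\widetilde S^{\,t}\xrightarrow{[\widetilde{g_1},\dots,\widetilde{g_t}]}\widetilde S$ lifting a presentation of $S/\ul{J_0} = S/\In_\om(I)_{\le d_t}$, and recall $\ul{J_0} = (\In_\om(g_i)\mid i)$. The columns of $\Phi$ lift syzygies $\sigma_1,\dots,\sigma_s$ of $\ul{\widetilde{g_1}},\dots,\ul{\widetilde{g_t}}$, i.e. of the initial forms; choosing the presentation minimally we may take $s = \beta_{0}(\syz_S(\Gamma))$, and the columns of degree $d_t+1$ number exactly $\beta_{0,d_t+1}(\syz_S(\Gamma))$. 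Second, I would check the degree bookkeeping: each push-forward $y^{a_i}\psi_i$ of $\sigma_i$ has $\deg_{\ul x}(\psi_i)$ equal to the degree of the corresponding syzygy, so the $\psi_i$ coming from degree-$(d_t+1)$ syzygies are exactly the ones that can contribute new generators or relations in degree $d_t+1$ — those $\sigma_i$ of degree $\le d_t$ push forward to $\psi_i$ already in $\In_\om(I)_{\le d_t}$ (the $g_i$ being a Gröbner basis up to degree $d_t$), hence contribute nothing new. Third, I would apply Lemma \ref{Lemma reduced GB single elt} successively to the $\psi_i$ of degree $d_t+1$: each such $\psi_i$ can be replaced by a single $g$ (possibly zero) of the same $\ul x$-degree with $\ul g\notin\ul I$ of the running ideal, without changing $y$-saturation; the nonzero replacements are exactly the new minimal Gröbner basis elements of degree $d_t+1$, so their number is $\le \beta_{0,d_t+1}(\syz_S(\Gamma))$, giving $\beta_{0,d_t+1}(\In_\om(I)) \le \beta_{0,d_t+1}(\syz_S(\Gamma))$. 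Fourth, for the syzygy count: after this iteration the degree-$(\le d_t+1)$ part of a free resolution of $\widetilde I$ stabilizes (no later iteration introduces generators in degree $\le d_t+1$, as those are all pushed forward from syzygies of degree $\ge d_t+1$ whose own effect appears in degree $\ge d_t+2$), and the first syzygies of $\In_\om(I)$ in degree $d_t+1$ are governed by how the degree-$(d_t+1)$ columns of $\Phi$ map — at most $s_{d_t+1} = \beta_{0,d_t+1}(\syz_S(\Gamma))$ of them. I would make this precise by comparing, degree by degree up to $d_t+1$, the resolution of $S/\ul{J_0}$ with that of $S/\In_\om(I)$, using semicontinuity of Betti numbers under the flat degeneration to the initial ideal together with the explicit generator count above.

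\medskip

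\textbf{The main obstacle.} The delicate point is the syzygy bound $\beta_{1,d_t+1}(\In_\om(I))\le\beta_{0,d_t+1}(\syz_S(\Gamma))$: one must argue that no first syzygy of $\In_\om(I)$ in degree $d_t+1$ can fail to be "seen" already among the lifted syzygies $\sigma_i$ of the initial forms, even though the $g_i$ are not a full Gröbner basis and $\syz_S(\Gamma)$ is larger than $\syz_S(J)$. The cleanest route is probably to observe that $\beta_{i,j}(S/\In_\om(I)) \ge \beta_{i,j}(S/I)$ componentwise (upper-semicontinuity in the weight degeneration), combine with the fact that the only generators of $\In_\om(I)$ below degree $d_t+1$ are the $\In_\om(g_i)$, and then bound the degree-$(d_t+1)$ strand of the minimal free resolution of $S/\In_\om(I)$ directly against the presentation data $\Phi$ — the count of degree-$(d_t+1)$ columns of $\Phi$, which by minimality of the chosen presentation of $S/\ul{J_0}$ equals $\beta_{0,d_t+1}(\syz_S(\Gamma))$. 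Making the "degree-$(d_t+1)$ strand is already determined after one iteration" claim airtight — i.e. that later iterations only affect strictly higher degrees — is where the real work lies, and it rests on the degree estimate $\deg_{\ul x}(\psi_i) = \deg(\sigma_i)$ for push-forwards together with the standing hypothesis pinning down $\In_\om(I)_{\le d_t}$.
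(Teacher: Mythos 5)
Your argument for $\beta_{0,d_t+1}(\In_\om(I)) \leq \beta_{0,d_t+1}(\syz_S(\Gamma))$ is sound and follows the paper's strategy closely: one iteration of Algorithm~\ref{BAlgorithm}, discard the push-forwards of degree $\leq d_t$ (which reduce to $0$ by Lemma~\ref{Lemma reduced GB single elt} and the hypothesis on $\ul{J_0}$), reduce the degree-$(d_t+1)$ push-forwards to a minimal set, and check — via a second iteration — that nothing new appears in degree $\leq d_t+1$. You do need to be a little careful with the sentence ``choosing the presentation minimally we may take $s = \beta_0(\syz_S(\Gamma))$'': the middle term of the presentation is pinned to $\widetilde S^{\,t}$ (one coordinate per $g_i$), so what is minimal is the generating set of $\syz_S(\Gamma)$, not a minimal free presentation of $S/\ul{J_0}$. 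That is the reading that makes the count of degree-$(d_t+1)$ columns equal $\beta_{0,d_t+1}(\syz_S(\Gamma))$.

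The real gap is in your treatment of $\beta_{1,d_t+1}(\In_\om(I))$. You try to push this through the same machinery — stabilization of the degree-$(\leq d_t+1)$ strand of the resolution of $\widetilde I$, bookkeeping of columns of $\Phi$, and an appeal to semicontinuity — but none of these steps actually closes. Semicontinuity under the weight degeneration gives $\beta_{i,j}(\In_\om(I)) \geq \beta_{i,j}(I)$, a \emph{lower} bound in the wrong direction, and does not relate $\In_\om(I)$ to $\ul{J_0}$ at all (there is no degeneration between those two). Nor is it clear how ``the first syzygies of $\In_\om(I)$ in degree $d_t+1$ are governed by the degree-$(d_t+1)$ columns of $\Phi$'': $\Phi$ lifts syzygies of $\Gamma$, not of $\In_\om(I)$, and you never explain the connection. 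The paper instead uses a short, self-contained degree argument here that requires no algorithm at all: any minimal first syzygy $\sigma$ of $\In_\om(I)$ of degree $d_t+1$ can involve only minimal generators of degree $\leq d_t$ (a nonzero scalar coefficient on a degree-$(d_t+1)$ generator would contradict minimality of the generating set); these generators form a subset of $\Gamma$, so $\sigma$ extends by zero to an element of $\syz_S(\Gamma)$; and since $\syz_S(\In_\om(I)_{\leq d_t})$ splits off as a direct summand of $\syz_S(\Gamma)$ (complemented by the free module of trivial syzygies of the redundant $\In_\om(g_i)$), $\sigma$ must be part of a minimal generating set of $\syz_S(\Gamma)$ as well. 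This gives $\beta_{1,d_t+1}(\In_\om(I)) = \beta_{0,d_t+1}(\syz_S(\In_\om(I)_{\leq d_t})) \leq \beta_{0,d_t+1}(\syz_S(\Gamma))$ directly. Your route to this inequality, as written, does not produce it.
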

\begin{proof}
Performing one iteration of Algorithm \ref{BAlgorithm} to $g_1,\ldots,g_t$, with the same notation used therein we obtain bi-homogeneous elements $\psi_1,\ldots,\psi_s \in \widetilde{S} \smallsetminus (y)\widetilde{S}$. By construction, such elements are the push forward of homogeneous syzygies $\sigma_1,\ldots,\sigma_s$ of the elements $\ul{\widetilde{g_1}},\ldots,\ul{\widetilde{g_t}}$. Since $\kk$-linearly dependent syzygies would give rise to $\kk$-linearly dependent push forwards, without loss of generality we can assume that $\sigma_1,\ldots,\sigma_s$ are $\kk$-linearly independent. Let $\delta_i = \deg_{\ul{x}}(\psi_i)$. By possibly relabeling such elements, we may assume that 
\[
\delta_1 \leq \ldots \leq \delta_r \leq d_t < \delta_{r+1} = \ldots = \delta_{r+u} = d_t+1<\delta_{r+u+1} \leq \ldots \leq \delta_s.
\]
We let $J = (\widetilde{g_1},\ldots,\widetilde{g_t})$, and we apply Lemma \ref{Lemma reduced GB single elt} to $J$ together with each element $\psi_i$, for $1 \leq i \leq r$. We then find elements $\gamma_1,\ldots,\gamma_r \in \widetilde{S}$ such that $(J,\psi_i):y^\infty = (J,\gamma_i):y^\infty$. Moreover, either $\gamma_i = 0$, or $\deg_{\ul{x}}(\gamma_i) = \deg_{\ul{x}}(\psi_i) \leq d_t$ and $\ul{\gamma_i} \notin \ul{J}$. Since by assumption ${\rm in}_{\om}(g_1),\ldots,{\rm in}_{\om}(g_t)$ already generate $\ul{J}= {\rm in}_{\om}(I)_{\leq d_t}$, we must have $\gamma_i=0$ for all $1 \leq i \leq r$. Note that the condition that $(J,\psi_i):y^\infty = (J,\gamma_i):y^\infty = J:y^\infty$ implies that the elements $\psi_1,\ldots,\psi_r$ can be disregarded in a subsequent iterations of Algorithm \ref{BAlgorithm}. 

We now apply Lemma \ref{Lemma reduced GB single elt} to $J$ and $\psi_{r+1}$ to obtain a bi-homogeneous element $\psi'_{1}$ such that $(J,\psi_{r+1}):y^\infty = (J,\psi'_{1}):y^\infty$; moreover, $\psi_1'$ is either zero or $\deg_{\ul{x}}(\psi_{1}') = d_t+1$ and $\ul{\psi_{1}'} \notin \ul{J}$. By successively applying Lemma \ref{Lemma reduced GB single elt} to $(J,\psi_1',\ldots,\psi_i')$ and $\psi_{r+i+1}$ for every $1 \leq i \leq u$ we find bi-homogeneous elements $\psi_{1}',\ldots,\psi_{u}' \in \widetilde{S}$ such that $(J,\psi_{r+1},\ldots\psi_{r+u}):y^\infty = (J,\psi_1',\ldots,\psi_u'):y^\infty$; in addition, such elements are either zero or they  have $\ul{x}$-degree equal to $d_t+1$, and they satisfy $\ul{\psi_{i+1}'} \notin \ul{(J,\psi_{1}',\ldots,\psi_i')}$. By only picking the non-zero elements among them,  
we finally obtain bi-homogeneous elements $\varphi_{1},\ldots,\varphi_v \in \widetilde{S} \smallsetminus (y)\widetilde{S}$, with $v \leq u$, of $\ul{x}$-degree $d_t+1$, such that 
the images of the elements $\ul{\varphi_{1}},\ldots,\ul{\varphi_{v}}$ inside $S/{\rm in}_\omega(I)_{\leq d_t}$ are minimal generators of the ideal they generate in such a ring. Furthermore, Lemma \ref{Lemma reduced GB single elt} guarantees that 
$(J,\varphi_1,\ldots,\varphi_v):y^\infty = (J,\psi_{r+1},\ldots,\psi_{r+u}):y^\infty$.

We can now repeat Algorithm \ref{BAlgorithm} with the elements $g_1,\ldots,g_t,\varphi_1,\ldots,\varphi_v,\psi_{r+u+1},\ldots,\psi_s$ as input. This returns the same elements $\psi_1,\ldots,\psi_s$ obtained before, together with new elements $\theta_1,\ldots,\theta_r \in \widetilde{S}\smallsetminus (y)\widetilde{S}$ obtained by the Algorithm by pushing forward syzygies that involve at least one of the elements $\ul{\varphi_1},\ldots,\ul{\varphi_v},\ul{\psi_{r+u+1}},\ldots,\ul{\psi_s}$. Since the images of $\ul{\varphi_1},\ldots,\ul{\varphi_v}$ inside $S/{\rm in}_\omega(I)_{\leq d_t}$ are $\kk$-linearly independent, we must have that $\deg_{\ul{x}}(\theta_i)>d_t+1$ for every $i$. Using Lemma \ref{Lemma reduced GB single elt} as before, we see once again that $\psi_1,\ldots,\psi_r$ can be disregarded in a subsequent iteration of Algorithm \ref{BAlgorithm}. Moreover, it is now clear that $(J,\varphi_1,\ldots,\varphi_v):y^\infty = (J,\varphi_1,\ldots,\varphi_v,\psi_{r+i}):y^\infty$ for every $1 \leq i \leq u$, and thus also the elements $\psi_{r+1},\ldots,\psi_{r+u}$ can be disregarded in the next iteration. We now see that any further iteration of Algorithm \ref{BAlgorithm}, together with the considerations we just made, does not return any new element in $\ul{x}$-degree at most $d_t+1$. However, as the algorithm must eventually returns a Gr\"obner basis of ${\rm in}_\omega(I)$, we conclude that ${\rm in}_\omega(g_1),\ldots,{\rm in}_\omega(g_t),\ul{\varphi_1},\ldots,\ul{\varphi_v}$ must generate ${\rm in}_{\omega}(I)$ in degree up to $d_t+1$.  
Thus, we have that $\beta_{0,d_t+1}(\syz_S(\Gamma)) \geq u \geq v = \beta_{0,d_t+1}({\rm in}_{\om}(I))$. 

Now, if $\sigma$ is any minimal generator of $\syz_S({\rm in}_{\om}(I))$ of degree $d_t+1$, then for degree reasons $\sigma$ must be a syzygy between minimal generators of ${\rm in}_{\om}(I)$ of degree at most $d_t$. Because of our assumptions, we therefore have that $\sigma \in \syz_S(\Gamma)$. If $\sigma$ was not a minimal generator of $\syz_S(\Gamma)$, a fortiori it would not be a minimal generator of $\syz_S({\rm in}_{\om}(I)_{\leq d_t})$. This shows that $\beta_{1,d_t+1}({\rm in}_{\om}(I)) = \beta_{0,d_t+1}(\syz_S({\rm in}_{\om}(I)_{\leq d_t})) \leq \beta_{0,d_t+1}(\syz_S(\Gamma))$.
\end{proof}

\subsection{General revlex preorders and complete intersections}

We recall how to define total preorders on monomials starting from a matrix. Let $\Omega$ be an $m \times n$ matrix with non-negative integer entries, and let $\boldsymbol{\omega}_i$ denote its $i$-th row. Then $\Omega$ induces a total preorder on monomials: we declare that ${\bf X}^{\bf u} \preccurlyeq_{\Omega} {\bf X}^{\bf v}$ if and only if either $\boldsymbol{\omega}_i \cdot {\bf u} = \boldsymbol{\omega}_i \cdot {\bf v}$  for all $i=1,\ldots,m$, or there is $1 \leq j < m$ such that $\boldsymbol{\omega}_i \cdot {\bf u} = \boldsymbol{\omega}_i \cdot {\bf v}$ for all $1 \leq i \leq j$, and $\boldsymbol{\omega}_{j+1} \cdot {\bf u} < \boldsymbol{\omega}_{j+1} \cdot {\bf v}$. As a consequence, we can talk about initial forms, and the initial ideal with respect to $\preccurlyeq_\Omega$, which we denote by ${\rm in}_\Omega(-)$. 

Given a matrix preorder $\preccurlyeq_\Omega$ and a finite set $\mathcal M$ of monomials, one can always find a weight $\boldsymbol{\omega}$ (depending on the set $\mathcal M$) such that for any $m_1, m_2 \in \mathcal M$ one has $m_1 \preccurlyeq_\Omega m_2$ if and only if $m_1 \preccurlyeq_{\boldsymbol{\omega}} m_2$. Thus, when computing ${\rm in}_\Omega(I)$ of a given homogeneous ideal $I$, by Noetherianity one can always reduce to computing ${\rm in}_{\boldsymbol{\omega}}(I)$ for some weight $\boldsymbol{\omega}$. In particular, one can use Algorithm \ref{BAlgorithm} and Lemma \ref{Lemma reduced GB single elt} in order to produce a minimal Gr{\"o}bner basis of a given ideal $I$ with respect to $\preccurlyeq_\Omega$.

We now introduce a special matrix preorder, which is extensively used in this article. Given integers $1 \leq h < n$, consider the $(n-h) \times n$ matrix
\[
\Omega_h = \left[\begin{array}{ccc|cccccc}
1 & \ldots &1 & 1 & 1 & \ldots  & 1 & 1 & 0 \\
1 & \ldots & 1 & 1  & 1&   \ldots  & 1& 0 & 0 \\
\vdots & \vdots & \vdots &\vdots & \vdots & \vdots & \vdots & \vdots \\
1 & \ldots & 1 & 1 & 0 & \ldots & 0 & 0 & 0 \\
1 & \ldots & 1 & 0 & 0 &\ldots & 0 & 0 & 0
\end{array}\right].
\]

We denote the matrix order induced by $\Omega_h$ simply by $\preccurlyeq_h$. 
Observe that $\preccurlyeq_1$ 
is nothing but the standard degree-revlex order on monomials of $S$. Given a homogeneous ideal $I$, we will denote by ${\rm in}_h(I)$ the initial ideal of $I$ with respect to $\preccurlyeq_h$, that is, the ideal generated by the initial forms ${\rm in}_h(f)$ of all polynomials $f \in I$.

\begin{lemma} \label{Lemma Initial CI}
Let $f_1,\ldots,f_h \in S=\kk[x_1,\ldots,x_n]$ be homogeneous polynomials, and assume that $f_1,\ldots,f_h,x_{h+1},\ldots,x_n$ is a full regular sequence in $S$. Then ${\rm in}_h(f_1,\ldots,f_h,x_{h+1},\ldots,x_n) = ({\rm in}_h(f_1),\ldots,{\rm in}_h(f_h),x_{h+1},\ldots,x_n)$. In particular, $f_1,\ldots,f_h$ are a Gr{\"o}bner basis with respect to 
$\preccurlyeq_h$, and ${\rm in}_h(f_1),\ldots,{\rm in}_h(f_h)$ are still a homogeneous regular sequence in $\ov{S} = \kk[x_1,\ldots,x_h]$. 
\end{lemma}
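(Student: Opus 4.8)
The plan is to exploit the very special staircase shape of the matrix $\Omega_h$, whose last $n-h$ "coordinate directions" successively eliminate the variables $x_n, x_{n-1}, \ldots, x_{h+1}$. Concretely, I would first observe that for a polynomial $f \in \ov{S} = \kk[x_1,\ldots,x_h]$, the order $\preccurlyeq_h$ restricted to monomials of $\ov S$ agrees with the ordinary degree-revlex order on $\ov S$ (only the top row of $\Omega_h$ is ever relevant, since all monomials of $\ov S$ have zero weight for rows $2,\ldots,n-h$ after the first tie is broken — in fact they only involve $x_1,\ldots,x_h$, and the first row already records total degree). In particular ${\rm in}_h(f_i)$ is a well-defined monomial-supported initial form; what Lemma \ref{Lemma Initial CI} really needs is to show that no "cross terms" involving $x_{h+1},\ldots,x_n$ interfere.

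The cleanest route is to use the degeneration-to-the-initial-ideal / flat family perspective together with a Hilbert function count. Since $f_1,\ldots,f_h,x_{h+1},\ldots,x_n$ is a regular sequence, $S/(f_1,\ldots,f_h,x_{h+1},\ldots,x_n)$ is a complete intersection and its Hilbert series is $\prod_{i=1}^h \frac{1 - t^{d_i}}{1-t}$ where $d_i = \deg f_i$ (the factors for $x_{h+1},\ldots,x_n$ being $(1-t)$ each, cancelling $n-h$ of the $n$ denominators $(1-t)$). On the other hand, $J := ({\rm in}_h(f_1),\ldots,{\rm in}_h(f_h),x_{h+1},\ldots,x_n)$ is contained in ${\rm in}_h(f_1,\ldots,f_h,x_{h+1},\ldots,x_n)$, and since ${\rm in}_h(f_i)$ lives in $\ov S$ and has the same degree $d_i$, the quotient $S/J$ has Hilbert function at least that of $\ov S/({\rm in}_h(f_1),\ldots,{\rm in}_h(f_h))$, which in turn (by the standard fact that dividing by one element drops the Hilbert function by at most the shift, with equality iff that element is a nonzerodivisor, or more simply by the inequality $\HF_{A/gA}(t) \geq \HF_A(t) - \HF_A(t-\deg g)$) is at least the Hilbert function of the complete intersection, i.e. $\prod \frac{1-t^{d_i}}{1-t}$. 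Since passing to an initial ideal preserves Hilbert function, $\HF_{S/{\rm in}_h(I)} = \HF_{S/I} = \prod\frac{1-t^{d_i}}{1-t}$. Chaining the inequalities
\[
\HF_{S/{\rm in}_h(I)} \ \leq\ \HF_{S/J}\ \leq\ \HF_{\ov S/({\rm in}_h(f_1),\ldots,{\rm in}_h(f_h))}\ \leq\ \prod_{i=1}^h \frac{1-t^{d_i}}{1-t}\ =\ \HF_{S/{\rm in}_h(I)}
\]
forces all of them to be equalities. Equality of the first two gives $J = {\rm in}_h(I)$ (an inclusion of homogeneous ideals with equal Hilbert function is an equality); equality of the last two, read as equality in the inequality $\HF_{\ov S/(g)} \geq \HF_{\ov S} - \HF_{\ov S}(-\deg g)$ applied one factor at a time, forces ${\rm in}_h(f_1),\ldots,{\rm in}_h(f_h)$ to be a regular sequence in $\ov S$. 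The Gröbner basis statement is then immediate: ${\rm in}_h(I) = ({\rm in}_h(f_1),\ldots,{\rm in}_h(f_h),x_{h+1},\ldots,x_n) = ({\rm in}_h(f_1),\ldots,{\rm in}_h(f_h),{\rm in}_h(x_{h+1}),\ldots,{\rm in}_h(x_n))$, so $f_1,\ldots,f_h,x_{h+1},\ldots,x_n$ is a Gröbner basis with respect to $\preccurlyeq_h$, and $f_1,\ldots,f_h$ alone form one for the ideal they generate.

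The main obstacle — really the only place where the specific shape of $\Omega_h$ enters — is verifying that ${\rm in}_h(f_i)$ actually lies in $\ov S$, i.e. involves none of $x_{h+1},\ldots,x_n$, so that the Hilbert-series bookkeeping above is legitimate. This is where I would argue directly from the staircase: if some monomial $m$ in the support of $f_i$ involves $x_n$, compare it with any monomial $m'$ of $f_i$ not involving $x_n$ obtained by the substitution process — but more cleanly, note that $f_1,\ldots,f_h,x_{h+1},\ldots,x_n$ being a regular sequence means that modulo $(x_{h+1},\ldots,x_n)$ the images $\bar f_1,\ldots,\bar f_h$ still form a regular sequence in $\ov S$, hence are nonzero; and one checks that for each $i$, the $\preccurlyeq_h$-maximal monomials of $f_i$ are exactly those surviving after setting $x_{h+1} = \cdots = x_n = 0$ with maximal remaining (degree-revlex) weight — because each row $n-h, n-h-1, \ldots$ of $\Omega_h$ penalizes the presence of $x_n$, then $x_{n-1}$, etc. Formally: among monomials of a fixed total degree (first row tie), the ones maximizing the weight vector $(\omega_2,\ldots,\omega_{n-h})$ are precisely the ones supported on $x_1,\ldots,x_h$, provided such monomials occur in the support — and they do, since $\bar f_i \neq 0$. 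Hence ${\rm in}_h(f_i) = {\rm in}_{\text{drlex}}(\bar f_i) \in \ov S$, which is exactly what the count needs. I would also record the small lemma that an inclusion $I_1 \subseteq I_2$ of homogeneous ideals in a graded ring with $\HF_{R/I_1} = \HF_{R/I_2}$ forces $I_1 = I_2$, and the one-variable Hilbert-function inequality with its equality case characterizing nonzerodivisors, both of which are standard.
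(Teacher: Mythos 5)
Your Hilbert-function approach is genuinely different from the paper's proof, which invokes the well-known commutation of revlex-type preorders with passing to the quotient by the last variables (Eisenbud, Thm.\ 15.7 and its corollaries) and leaves the rest to the reader. Your route can be made to work, but as written it has a direction error that breaks the chain of inequalities, together with a smaller misidentification of the initial forms.

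First, the small error. The preorder $\preccurlyeq_h$ restricted to monomials of $\ov{S}$ is \emph{not} degree-revlex: every row of $\Omega_h$ assigns the same weight (total degree) to a monomial in $x_1,\ldots,x_h$, so any two monomials of $\ov S$ of the same degree are tied. Consequently your identification ${\rm in}_h(f_i)={\rm in}_{\rm drlex}(\bar f_i)$ is wrong; since all the zero-tail monomials of $f_i$ tie for the maximum, the initial form is the entire reduction, ${\rm in}_h(f_i)=\bar f_i\in\ov S$, where $\bar f_i$ denotes the image of $f_i$ modulo $(x_{h+1},\ldots,x_n)$.

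Second, and more seriously, the third inequality in your displayed chain runs the wrong way. For \emph{any} $h$ forms $g_1,\ldots,g_h\in\ov S$ of degrees $d_1,\ldots,d_h$ the quotient has Hilbert function \emph{at least} that of the complete intersection, $\HF_{\ov S/(g_1,\ldots,g_h)}\geq\prod(1-t^{d_i})/(1-t)$, with equality precisely when the $g_i$ form a regular sequence; this is the same direction as the first inequality $\HF_{S/{\rm in}_h(I)}\leq\HF_{S/J}$, so the chain never closes and you cannot conclude anything. (The single-element inequality you cite also does not iterate in the naive way, because after one division you need an upper bound on the shifted Hilbert function and the inequality only supplies a lower one; the correct proof of the CI lower bound is by semicontinuity from a generic specialization, but this is beside the point here since it gives the same useless direction.) The missing ingredient is a direct argument that $\bar f_1,\ldots,\bar f_h$ is a regular sequence in $\ov S$, and that is immediate from the hypothesis: $S$ is Cohen--Macaulay and $f_1,\ldots,f_h,x_{h+1},\ldots,x_n$ are homogeneous of positive degree, so any permutation of this sequence remains regular; in particular $x_{h+1},\ldots,x_n,f_1,\ldots,f_h$ is regular, which says exactly that $\bar f_1,\ldots,\bar f_h$ is a regular sequence in $\ov S=S/(x_{h+1},\ldots,x_n)$. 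With this observation in hand, $J$ and ${\rm in}_h(I)$ are both complete intersections cut out in the same degrees, $\HF_{S/J}=\prod(1-t^{d_i})/(1-t)=\HF_{S/I}=\HF_{S/{\rm in}_h(I)}$, and the inclusion $J\subseteq{\rm in}_h(I)$ of homogeneous ideals with equal Hilbert functions forces $J={\rm in}_h(I)$. At that point your Gr\"obner-basis and regular-sequence conclusions follow as you say.
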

\begin{proof}
It suffices to prove the first statement, which is just a consequence of well-known commutation properties of revlex-type preorders with modding out the last variables (for instance, see \cite[15.7]{Eisenbud}).
\end{proof}

\section{A non-standard grading and the $\Lambda$-construction} \label{Section Lambda}
We introduce a grading on $S=\kk[x_1,\ldots,x_n]$ which lies in between the standard grading and the monomial $\NN^n$-grading.  We view $S$ as $\ov{S}[x_{h+1},\ldots,x_n]$, and 
for $i=0,\ldots,n-h$ we let $\eta_i \in \NN^{n-h+1}$ be the vector with $1$ in position $i+1$ and $0$ elsewhere. We let an element of $\ov{S}_j$ have degree $j \cdot \eta_0$ in $S$, while for $h+1 \leq i \leq n$ we let $x_i$ have degree $\eta_{i-h}$. Extending this accordingly defines the desired $\NN \times \NN^{n-h}$-grading on $S$ 
for any $1 \leq h \leq n$. 
Observe that the choice $h=n$ gives the standard grading on $S$, while $h=1$ corresponds to the monomial $\NN^n$-grading. 

Given an element $g \in S$ which is homogeneous with respect to this new grading, we denote by $\deg_{(1,1)}(g) \in \NN^{n-h+1}$ its $\NN \times \NN^{n-h}$-degree. Observe that we can write $\deg_{(1,1)}(g) = (e,\epsilon)$ for some $e \in \NN$ and $\epsilon \in \NN^{n-h}$. We then let $\deg_{(1,0)}(g) = e$ and $\deg_{(0,1)}(g) = \epsilon$. We still denote by $\deg(g) = e+|\epsilon|$, where $|\epsilon|$ denotes the sum of the entries of $\epsilon \in \NN^{n-h}$, the degree of $g$ with respect to the standard grading in $S$, and we sometimes refer to it as the total degree of $g$ to distinguish it from the previous ones.

\begin{remarks} It follows directly from the given definitions that:
\begin{enumerate}[(i)]
\item A polynomial of $S$ is homogeneous with respect to the $\NN \times \NN^{n-h}$-grading if and only if it is the product of a homogeneous element in $\ov{S}$ with a monomial in the last $n-h$ variables.
\item If $\Gamma=\{g_1,\ldots,g_t\}$ is a list of are $\NN \times \NN^{n-h}$-graded polynomials in $S$, say $\deg_{(1,1)}(g_i) = \eta_i$, then $\syz_S(\Gamma)$ is an $\NN \times \NN^{n-h}$-graded submodule of 
$\bigoplus_{i=1}^t S(-\eta_i)$.
\item If $I$ is an ideal in $S$ which is homogeneous with respect to the standard grading, then ${\rm in}_h(I)$ is $\NN \times \NN^{n-h}$-graded.
\end{enumerate}
\end{remarks}

Given a list of $\NN \times \NN^{n-h}$-graded polynomials $\Gamma= \{\gamma_1,\ldots,\gamma_t\}$. For $1 \leq r \leq t$ we write $\gamma_r = \ov{\gamma_r}m_r$ for some homogeneous element $\ov{\gamma}_r \in \ov{S}$ and a monic monomial $m_r \in \kk[x_{h+1},\ldots,x_n]$. Let $L_r = \{{\rm lcm}(m_{i_1},\ldots,m_{i_s},m_r) \mid \{i_1,\ldots,i_s\} \subseteq \{1,\ldots,r-1\}\}$. 
We consider the following free $\ov{S}$-module
\[
\Lambda(\Gamma) = \bigoplus_{r=1}^t \bigoplus_{m \in L_r} \ov{S} \cdot e_m^r.
\]
We give to an element $e_m^r \in L_r$ of the basis degree equal to $\deg(\gamma_r) + \deg(m) - \deg(m_r) = \deg(m) + \deg(\ov{\gamma_r})$. Observe that the $j$-th graded component of $\Lambda(\Gamma)$ is
\[
\Lambda(\Gamma)_j = \bigoplus_{r=1}^t \left(\bigoplus_{m \in L_r} \ov{S}_{j-\deg(m)-\deg(\ov{\gamma_r})} \cdot e_m^r \right).
\]

Set $\lambda_j(\Gamma) = \dim_\kk(\Lambda(\Gamma)_j)$. We start with a dimension count, which inductively allows to control $\lambda_j(\Gamma)$ as one appends elements to the list $\Gamma$.

\begin{lemma} \label{Lemma lambda count} Let $\Gamma_1$ be a list of $\NN \times \NN^{n-h}$-graded polynomials of $S$ of degree at most $j-1$, and $\Gamma_2$ be a list of $\NN \times \NN^{n-h}$-graded polynomials of degree $j$ such that $|\Gamma_2| \leq \lambda_j(\Gamma_1)$. Let $\Gamma$ be the list obtained by appending $\Gamma_2$ to $\Gamma_1$. Then
\[
\lambda_{j+1}(\Gamma) \leq 2 \lambda_j(\Gamma_1)^2 +(2h-1)\lambda_j(\Gamma_1).
\]
\end{lemma}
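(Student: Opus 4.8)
The plan is to estimate $\lambda_{j+1}(\Gamma)$ by splitting the direct sum defining $\Lambda(\Gamma)_{j+1}$ according to whether a basis element $e_m^r$ comes from an index $r$ with $\gamma_r \in \Gamma_1$ or from an index $r$ with $\gamma_r \in \Gamma_2$. First I would set $N = \lambda_j(\Gamma_1)$ and write $\Gamma_1 = \{\gamma_1,\ldots,\gamma_p\}$, $\Gamma_2 = \{\gamma_{p+1},\ldots,\gamma_{p+q}\}$ with $q \leq N$, so that $\Gamma = \{\gamma_1,\ldots,\gamma_{p+q}\}$. For the contribution of the indices $r \leq p$: the monomial sets $L_r$ computed relative to $\Gamma$ are larger than those computed relative to $\Gamma_1$, since now lcm's can also involve the monomials $m_{p+1},\ldots,m_{p+q}$ of $\Gamma_2$. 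The key observation is that since $\deg(\ov{\gamma_r}) + \deg(m_r) = \deg(\gamma_r) \leq j-1$ and each $m \in L_r$ arising in $\Lambda(\Gamma)$ is an lcm involving $m_r$, the relevant part of $L_r$ for computing $\Lambda(\Gamma)_{j+1}$ corresponds exactly, shift by shift, to generators of the module $\syz$-type structure encoding these lcm's; and appending the $q$ new monomials multiplies the count coming from degree-$j$ data by at most a controlled factor. I expect the cleanest bookkeeping is: $\dim_\kk\big(\bigoplus_{r=1}^{p} \bigoplus_{m \in L_r,\, \deg(m)+\deg(\ov{\gamma_r}) \leq j+1} \ov{S}_{j+1-\deg(m)-\deg(\ov{\gamma_r})}\big)$ can be bounded by $\lambda_j(\Gamma_1)$ times the number of new ``slots'' created, which after grouping lcm's gives a term of order $\lambda_j(\Gamma_1)^2$ plus a linear correction.

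Next I would handle the contribution of the indices $r$ with $p+1 \leq r \leq p+q$, i.e.\ $\gamma_r \in \Gamma_2$. Here $\deg(\gamma_r) = j$, so $\deg(\ov{\gamma_r}) + \deg(m_r) = j$, and for a basis element $e_m^r$ to sit in degree $j+1$ we need $\deg(m) + \deg(\ov{\gamma_r}) \leq j+1$, i.e.\ $\deg(m) \leq \deg(m_r) + 1$. Since $m = {\rm lcm}(m_{i_1},\ldots,m_{i_s},m_r)$ is a multiple of $m_r$, either $m = m_r$ (one choice, contributing $\dim_\kk \ov{S}_1 = h$) or $m = m_r \cdot x_\ell$ for some $h+1 \leq \ell \leq n$ with $x_\ell$ dividing some $m_{i_k}/\gcd$; in the latter case $\deg(m)+\deg(\ov{\gamma_r}) = j+1$ so $\ov{S}_0 = \kk$ contributes $1$ per such $\ell$. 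The number of distinct such $m$ of the form $m_r x_\ell$ appearing in $L_r$ is bounded by the number of variables $x_\ell$ occurring in the monomials $m_1,\ldots,m_{p+q}$, but to keep the bound independent of $n$ I would instead argue that each such $m_r x_\ell$ occurring forces $x_\ell \mid {\rm lcm}(m_1,\ldots,m_{p+q})$, and then bound the total count over all $r$ directly: summing $h + (\#\text{such }\ell)$ over the $q \leq N$ indices and noting the $\ell$-contributions are globally controlled by the degree-$j$ data (each such slot injects into $\Lambda(\Gamma_1)_j$ via an appropriate shift map), one gets at most $hN + N^2$ or so, which I would absorb into the stated bound.

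I would then add the two contributions. The first (indices $r \leq p$) should come out to at most $\lambda_j(\Gamma_1)^2 + (h-1)\lambda_j(\Gamma_1)$ after careful accounting — the square coming from pairing ``old slot $\times$ new monomial'', the linear $h$-term from the base case $m = m_r$ — and the second (indices $r > p$) to at most $\lambda_j(\Gamma_1)^2 + h\lambda_j(\Gamma_1)$, using $q \leq N = \lambda_j(\Gamma_1)$. Summing gives $2\lambda_j(\Gamma_1)^2 + (2h-1)\lambda_j(\Gamma_1)$, as claimed. The main obstacle I anticipate is the first step: making precise the claim that enlarging each $L_r$ by incorporating the monomials from $\Gamma_2$ multiplies the degree-$(j+1)$ dimension count by a factor bounded by $\lambda_j(\Gamma_1)$ rather than by something depending on $n$ or on $|\Gamma_2|$ unboundedly. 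The way I would make this rigorous is to exhibit, for each basis element $e_m^r$ of $\Lambda(\Gamma)_{j+1}$ with $r \leq p$ and $m \notin L_r^{(\Gamma_1)}$, an injection of its graded piece into $\ov{S}_{\leq 1} \otimes_\kk \Lambda(\Gamma_1)_j$ determined by writing $m = {\rm lcm}(m', m_{i})$ with $m' \in L_r^{(\Gamma_1)}$ and $m_i$ one of the $\Gamma_2$-monomials, together with the degree-$j$ slot $e_{m'}^r$; the ambiguity in this factorization is what produces the square, and the $\ov{S}_{\leq 1}$ factor is what produces the linear term with the $h$ (since $\dim_\kk \ov{S}_1 = h$). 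Once that injection is set up the inequality is a one-line dimension count, and the remaining parts are the routine bookkeeping sketched above.
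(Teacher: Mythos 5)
Your overall decomposition of $\Lambda(\Gamma)_{j+1}$ according to whether the index $r$ corresponds to an element of $\Gamma_1$ or of $\Gamma_2$ is the same as the paper's, but the analysis of the first block contains a misreading of the definition that you then spend most of your effort trying to repair. Recall that $L_r$ is defined as $L_r = \{\operatorname{lcm}(m_{i_1},\ldots,m_{i_s},m_r)\mid \{i_1,\ldots,i_s\}\subseteq\{1,\ldots,r-1\}\}$, so it only involves the monomials $m_1,\ldots,m_{r-1}$ with indices \emph{strictly smaller than} $r$. Consequently, for $r\leq p=|\Gamma_1|$, the set $L_r$ computed relative to $\Gamma$ is \emph{identical} to the one computed relative to $\Gamma_1$ — appending $\Gamma_2$ does not enlarge it. The ``main obstacle'' you flag (controlling how each $L_r$ grows when new monomials from $\Gamma_2$ are adjoined) therefore does not exist, and the injection into $\ov{S}_{\leq 1}\otimes_\kk\Lambda(\Gamma_1)_j$ that you propose to circumvent it is solving the wrong problem. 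Because of this, the contribution $V$ from indices $r\leq p$ is simply $\Lambda(\Gamma_1)_{j+1}$, which the paper bounds directly by $h\,\lambda_j(\Gamma_1)+\binom{\lambda_j(\Gamma_1)}{2}$: the slots $e_m^r$ of degree $\leq j$ contribute $\dim_\kk\ov{S}_{j+1-\deg(e_m^r)}\leq h\dim_\kk\ov{S}_{j-\deg(e_m^r)}$, while the slots of degree exactly $j+1$ are counted by pairs of lcm's of degree $\leq j$. Your claimed bound of $\lambda_j(\Gamma_1)^2+(h-1)\lambda_j(\Gamma_1)$ for this block is not derived from the argument you sketch; it appears to be back-solved from the target total.

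For the block coming from $\Gamma_2$, your reasoning about degrees is correct ($m=m_r$ gives an $\ov{S}_1$ contribution of $h$ per element of $\Gamma_2$; otherwise $m/m_r$ has degree one and $\ov{S}_0=\kk$ contributes $1$), but the count of the degree-one lcm's is incomplete. A slot $e_m^r$ of degree $j+1$ with $p<r\leq p+q$ and $m\neq m_r$ is captured by an lcm of $m_r$ with a single earlier monomial $m_i$, where $i<r$ ranges over \emph{both} $\{1,\ldots,p\}$ and $\{p+1,\ldots,r-1\}$. This gives at most $|\Gamma_1|\cdot|\Gamma_2|+\binom{|\Gamma_2|}{2}$ distinct such $m$, i.e.\ $\lambda_j(\Gamma_1)^2+\binom{\lambda_j(\Gamma_1)}{2}$; your count of $\lambda_j(\Gamma_1)^2$ omits the second term. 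The proposed ``injection into $\Lambda(\Gamma_1)_j$'' for these slots cannot be made precise as stated, because the monomials $m_r$ with $r>p$ do not appear anywhere in $\Lambda(\Gamma_1)$. Once both blocks are counted correctly, the total is $h\lambda+\binom{\lambda}{2}$ plus $h\lambda+\lambda^2+\binom{\lambda}{2}$, which simplifies exactly to $2\lambda^2+(2h-1)\lambda$ with $\lambda=\lambda_j(\Gamma_1)$. I would start by fixing the reading of $L_r$; once you see that the old slots are unchanged, the argument becomes a short dimension count along the lines above, and the machinery you were building is unnecessary.
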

\begin{proof}
Let $\Gamma_1=\{\gamma_1,\ldots,\gamma_t\}$ and $\Gamma_2=\{\gamma_{t+1},\ldots,\gamma_{t+s}\}$. Observe that, by construction of $\Lambda(\Gamma_1)$, we have that $|\Gamma_1| = t \leq \lambda_j(\Gamma_1)$. Write $\gamma_i = \ov{\gamma_i}m_i$ for some homogeneous $\ov{\gamma_i} \in \ov{S} = \kk[x_1,\ldots,x_h]$ and monic monomials $m_i \in \kk[x_{h+1},\ldots,x_n]$. We can write
\[
\Lambda(\Gamma)_{j+1} = \left(\bigoplus_{r=1}^t\bigoplus_{m \in L_r} \ov{S}_{j+1-\deg(m)-\deg(\ov{\gamma_r})} \cdot e_m^r \right) \oplus \left(\bigoplus_{r={1}}^s\bigoplus_{m \in L_{t+r}} \ov{S}_{j+1-\deg(m)-\deg(\ov{\gamma_{t+r}})} \cdot e_m^r \right) =: V \oplus W.
\]
Observe that 
\[
V = \left(\bigoplus_{r=1}^t\bigoplus_{m \in L_r, \deg(e_m^r) \leq j} \ov{S}_{j+1-\deg(m)-\deg(\ov{\gamma_r})} \cdot e_m^r\right) \oplus \left(\bigoplus_{r=1}^t\bigoplus_{m \in L_r, \deg(e_m^r)=j+1} \kk \cdot e_m^r\right).
\]
The dimension of the first summand of $V$ is
\begin{align*}
\sum_{r=1}^t \sum_{m \in L_r, \deg(e_m^r) \leq j} \dim_\kk \ov{S}_{j+1-\deg(m)-\deg(\ov{\gamma_r})} \\
\leq \sum_{r=1}^t \sum_{m \in L_r, \deg(e_m^r) \leq j} h \dim_\kk \ov{S}_{j-\deg(m)-\deg(\ov{\gamma_r})} = h \lambda_j(\Gamma_1).
\end{align*}
The dimension of the second summand of $V$ is bounded above by all the possible least common multiples between an element $m' \in L_r$ for some $1 \leq r \leq t$ such that $\deg(e_{m'}^r) \leq j$, together with a monomial $m_i \in \{m_1,\ldots,m_r\}$. This number is bounded above by the possible choices of unordered pairs of distinct elements $\{m,m'\} \subseteq \{\widetilde{m} = {\rm lcm}(m_{i_1},\ldots,m_{i_s}) \mid \{i_1,\ldots,i_s\} \subseteq \{1,\ldots,t\}, \deg(e_{\widetilde{m}}^s) \leq j\}$. 
It is immediate to see from the description of $\Lambda(\Gamma_1)$ that the cardinality of the latter is at most  
$\lambda_j(\Gamma_1)$, and therefore the dimension of the second summand of $V$ is at most ${\lambda_j(\Gamma_1) \choose 2}$.


The dimension of $W$ is bounded above by three contributions. The first comes from the summand
\[\bigoplus_{r=1}^s\left(\bigoplus_{m \in L_{t+r}, \deg(e_m^{t+r})=j} \ov{S}_1 e_m^{t+r}\right) = \bigoplus_{r=1}^s \ov{S}_1 e_{m_{t+r}}^{t+r},
\] 
whose dimension is $\sum_{r=1}^s \dim_\kk(\ov{S}_1) = h |\Gamma_2| \leq h \lambda_j(\Gamma_1)$. 

Since $\deg(e_{m_{t+r}}^{t+r}) = j$ for every $1 \leq r \leq s$, the second and the third contributions to $\dim_\kk(W)$ come from least common multiples $m$ performed between a monomial $m_{t+r}$ for some $1 \leq r \leq s$ and another monomial which either belongs to $\{m_1,\ldots,m_t\}$ or to $\{m_{t+1},\ldots,m_{t+r-1}\}$, in such a way that $\deg(e_m^{t+r}) = j+1$. Note that there are at most $|\Gamma_2| \cdot |\Gamma_1| \leq \lambda_j(\Gamma_1)^2$ many possible least common multiples coming from the first case scenario, while there are at most
${|\Gamma_2| \choose 2} \leq {\lambda_j(\Gamma_1) \choose 2}$ possible ones coming from the second. Putting all estimates together,  we conclude that
\[
\lambda_{j+1}(\Gamma) \leq 2 \lambda_j(\Gamma_1)^2 + (2h-1) \lambda_j(\Gamma_1). \qedhere
\]
\end{proof}

The following proposition justifies the introduction of $\Lambda(\Gamma)$.

\begin{proposition} \label{Prop bound lambda}
Let $\varphi_1,\ldots,\varphi_h \in \ov{S}$ be homogeneous elements, and let $D > \max\{j \in \NN \mid \beta_{1,j}(\varphi_1,\ldots,\varphi_h)\ne 0\}$. Let $\gamma_1,\ldots,\gamma_t$ be $\NN \times \NN^{n-h}$ homogeneous elements of degrees at least $D$ and at most $D+j$ for some $j \geq 0$. Set $\Gamma = \{\gamma_{1},\ldots,\gamma_t\}$ and let $J=(\varphi_1,\ldots,\varphi_h,\gamma_1,\ldots,\gamma_t)$. Then $\beta_{1,D+j+1}(J) \leq \lambda_{D+j+1}(\Gamma)$.
\end{proposition}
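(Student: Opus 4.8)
The plan is to bound $\beta_{1,D+j+1}(J)$ by counting, in a structured way, the first syzygies of $J$ in total degree $D+j+1$. First I would observe that since $D$ exceeds the top degree of a first syzygy of the regular-type data $\varphi_1,\ldots,\varphi_h$ — here I would actually want $\varphi_1,\ldots,\varphi_h$ to be a regular sequence, or at least to control $\syz_{\ov S}(\varphi_1,\ldots,\varphi_h)$ — and since every $\gamma_i$ has degree at least $D$, any minimal first syzygy of $J$ in degree $D+j+1 \le 2D-1$ cannot involve two of the $\gamma_i$'s together with a nonzero coefficient on the $\varphi$-part in a way that creates new relations beyond what the Koszul relations among the $\varphi$'s and the ``division'' relations between $\varphi$'s and $\gamma$'s already account for. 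Concretely, a syzygy $(\ul a, \ul b) \in \ov S^h \oplus S^t$ of $(\varphi_1,\ldots,\varphi_h,\gamma_1,\ldots,\gamma_t)$ in degree $D+j+1$ either has $\ul b = 0$, hence lies in $\syz_{\ov S}(\varphi_1,\ldots,\varphi_h)$ and so is a combination of lower-degree Koszul syzygies (contributing nothing new in degree $D+j+1$ by the choice of $D$), or $\ul b \ne 0$, in which case $\sum_i b_i \gamma_i \in (\varphi_1,\ldots,\varphi_h)$.

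The heart of the argument is then to encode the possible ``leading monomial parts'' of such syzygies. Each $\gamma_r = \ov{\gamma_r} m_r$ splits as a homogeneous element of $\ov S$ times a monomial in the last $n-h$ variables, so any $S$-syzygy is $\NN \times \NN^{n-h}$-graded; fixing the $\NN^{n-h}$-degree $\epsilon$, the coefficient of $\gamma_r$ in a syzygy of that multidegree is forced to be divisible by $\x^{\epsilon}/m_r$, and minimality forces $\x^\epsilon$ to be a least common multiple of some subset of the $m_r$'s. This is exactly the combinatorial data recorded by the basis elements $e_m^r$ of $\Lambda(\Gamma)$: for each $r$ and each $m = \operatorname{lcm}(m_{i_1},\ldots,m_{i_s},m_r)$ with $\{i_1,\ldots,i_s\}\subseteq\{1,\ldots,r-1\}$, the generator $e_m^r$ has degree $\deg(m)+\deg(\ov{\gamma_r})$, matching the degree of the $r$-th coordinate $(\x^\epsilon/m_r)\cdot(\text{something in }\ov S)$ of a candidate syzygy. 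I would set up an injective $\kk$-linear map from a complement (in the degree-$(D+j+1)$ graded piece of a minimal generating set of $\syz_S(J)$) of the part coming from $\syz_{\ov S}(\varphi_1,\ldots,\varphi_h)$ into $\Lambda(\Gamma)_{D+j+1}$, sending a minimal syzygy to the tuple of its $\gamma$-coordinates reorganized along the $e_m^r$ basis. The injectivity uses that distinct minimal syzygies which agree on all $\gamma$-coordinates differ by a syzygy of the $\varphi$'s alone, which — by the degree hypothesis on $D$ — is a $\kk$-combination of strictly lower-degree syzygies and hence not a new minimal generator in degree $D+j+1$; so after passing to the quotient by that part, the map is injective, giving $\beta_{1,D+j+1}(J) \le \lambda_{D+j+1}(\Gamma)$.

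The main obstacle I anticipate is making precise the claim that ``minimality of a syzygy forces $\x^\epsilon$ to be an lcm of a subset of the $m_r$'s'' together with the bookkeeping of which $\ov S$-degree each surviving coordinate carries, so that the degree of the target basis element $e_m^r$ in $\Lambda(\Gamma)$ genuinely matches — this is where one has to be careful that the splitting $\gamma_r = \ov{\gamma_r}m_r$ interacts correctly with the multidegree, and that the contribution of the $\varphi$-part can be entirely absorbed. The secondary subtlety is handling the case $\ul b \ne 0$ but where cancellation among the $\sum b_i\gamma_i$ happens inside $(\varphi_1,\ldots,\varphi_h)$: one must argue this still produces a genuine element of $\syz_S(J)$ whose $\gamma$-coordinates land among the allowed $e_m^r$, which should follow because the $\varphi$-coordinates can be chosen $\NN\times\NN^{n-h}$-homogeneously of the same multidegree and then re-expanded. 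Modulo these points, the displayed inequality follows by the counting described, and this is precisely the estimate that Lemma~\ref{Lemma lambda count} is designed to iterate.
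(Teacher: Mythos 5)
Your overall strategy---use the $\NN\times\NN^{n-h}$-graded structure of $\syz_S(J)$, recognize that minimality constrains the $\NN^{n-h}$-degree of a syzygy to be a least common multiple of the relevant $m_r$'s, and inject (the image of) a minimal generating set of $\syz_S(J)$ in degree $D+j+1$ into $\Lambda(\Gamma)_{D+j+1}$---is the same as the paper's in spirit, and you correctly identify both the role of the degree hypothesis on $D$ and the reason the $\varphi$-only syzygies can be absorbed. However, the map you sketch does not quite work, and there is a genuine gap in reconciling well-definedness with injectivity.

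You propose to send a minimal bigraded syzygy $\sigma$ to ``the tuple of its $\gamma$-coordinates reorganized along the $e_m^r$ basis.'' If this means recording all the nonzero $\gamma$-coordinates, the map does not land in $\Lambda(\Gamma)$. Suppose $\sigma$ has nonzero $\gamma$-coordinates at positions $r_1<\cdots<r_k$, with common $\NN^{n-h}$-degree $\epsilon$; minimality forces $\x^\epsilon={\rm lcm}(m_{r_1},\ldots,m_{r_k})$ (otherwise a common monomial factors out of every coordinate, including the $\varphi$-coordinates). For the last position $r_k$ this lcm lies in $L_{r_k}$, since $\{r_1,\ldots,r_{k-1}\}\subseteq\{1,\ldots,r_k-1\}$. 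But for an intermediate position $r_i$ with $i<k$, the set $\{r_1,\ldots,r_k\}\setminus\{r_i\}$ contains indices larger than $r_i$, so there is no reason for $\x^\epsilon$ to lie in $L_{r_i}$. Only the last nonzero coordinate reliably produces a legitimate basis vector $e_m^r$.

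If instead you project only onto that last coordinate, as the paper does, your injectivity argument no longer applies. A linear dependence among the images does not force $\sum_i\alpha_i\sigma^{(i)}$ to have zero $\gamma$-part; it only kills the largest-index coordinate. The paper closes this gap with an extremal choice you do not have: writing $W(\sigma)$ for the largest index of a nonzero coordinate of $\sigma$ and $W(\Sigma)=\sum_{\sigma\in\Sigma}W(\sigma)$, it chooses a bigraded minimal generating set $\Sigma$ of $\syz_S(J)$ up to degree $D+j+1$ that minimizes $W(\Sigma)$. Then a $\kk$-dependence among images of elements of $\Sigma$ at the same level $W=h+r$ and multidegree would let one replace a generator by the combination $\sigma'=\sum_i\alpha_i\sigma^{(i)}$, which has $W(\sigma')<h+r$, producing a minimal generating set with strictly smaller $W(\Sigma)$ and contradicting minimality. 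Without some such normalization of the generating set, the injectivity of the last-coordinate map does not follow from the degree hypothesis on $D$ alone. This is the one missing idea; the rest of your outline is sound.
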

\begin{proof}
Write $\gamma_i = \ov{\gamma_i}m_i$ for some monic monomials $m_i \in \kk[x_{h+1},\ldots,x_n]$, and homogeneous elements $\ov{\gamma_i} \in \ov{S}$. Given an $\NN \times \NN^{n-h}$-graded syzygy $\sigma$ of $\varphi_1,\ldots,\varphi_h,\gamma_1,\ldots,\gamma_t$, we can represent it as a $(h+t)$-uple 
\[
\sigma=(s_1,\ldots,s_{h+t}) \in \left(\bigoplus_{i=1}^h S\left(-(\deg(\varphi_i),0)\right)\right)\oplus \left(\bigoplus_{i=1}^t S\left(-(\deg(\ov{\gamma_i}),\epsilon_i)\right)\right),
\]
where $\epsilon_i$ represents the exponent vector of the monomial $m_i$ with respect to the variables $x_{h+1},\ldots,x_n$. 
If $\sigma \ne 0$, we let $W(\sigma) = \max\{i \mid s_i \ne 0\}$. Given a finite set $\Sigma$ of $\NN\times \NN^{n-h}$-graded syzygies of $\Gamma$, we let $W(\Sigma) = \sum_{\sigma \in \Sigma} W(\sigma)$. 
Among all sets $\Sigma$ of $\NN\times \NN^{n-h}$-graded elements which minimally generate $\syz_S(J)$ up to degree $D+j+1$, we pick one which minimizes $W(\Sigma)$. 

We claim that there is an injective map of $\kk$-vector spaces $\psi: (\kk \Sigma)_{D+j+1} \hookrightarrow \Lambda(\Gamma)_{D+j+1}$, defined on a $\kk$-basis as follows. First of all, we note that any element $\sigma \in \Sigma$ must satisfy $W(\sigma) > h$, otherwise it would correspond to a minimal syzygy of $\varphi_1,\ldots,\varphi_h$ of degree $D+j+1$, contradicting our choice of $D$. For every $1 \leq r \leq t$, we let $\Sigma_r=\{\sigma \in \Sigma \mid W(\sigma)=h+r\}$. For $\sigma = (s_1,\ldots,s_h,\sigma_1,\ldots,\sigma_t) \in \Sigma_r$ we can write $\sigma_{r} = \ov{\sigma_{r}} m_{r}'$ for some monic monomial $m_r' \in \kk[x_{h+1},\ldots,x_n]$ and some element $\ov{\sigma_r} \in \ov{S}_{D+j+1-\deg(\gamma_r) - \deg(m_r')}$. Since $\sigma \in \Sigma_r$ is part of a minimal graded generating set of $\syz_S(J)$, the monomial $m_rm_r'$ must be obtained as the least common multiple of  some of the monomials $m_1,\ldots,m_{r-1}$, together with $m_r$. In other words, $m:=m_rm_r' \in L_r$, and we can set $\psi(\sigma) = (\ov{\sigma_r}m)e_m^r \in \Lambda(\Gamma)_{D+j+1}$. To show that this map is injective, let $\sigma^{(1)},\ldots,\sigma^{(s)}$ be a $\kk$-basis of $(\kk \Sigma)_{D+j+1}$, and assume by way of contradiction that $\psi(\sigma^{(1)}),\ldots,\psi(\sigma^{(s)})$ are $\kk$-linearly dependent. Without loss of generality, we may assume that we have non-zero elements $\alpha_1,\ldots,\alpha_s \in \kk$ and a $\kk$-linear combination $\alpha_1 \psi(\sigma^{(1)}) + \ldots + \alpha_s\psi(\sigma^{(s)}) = 0$, where all the elements $\sigma^{(i)}$ belong to $\Sigma_r$ for some $1 \leq r \leq t$, and $\sigma^{(i)}_r = \ov{\sigma_r}^{(i)}m_r'$ for all $i$. We let $m=m_rm_r'$. We have
\[
0 = \sum_{i=1}^s \alpha_i \psi(\sigma^{(i)}) = \left(\sum_{i=1}^s \alpha_i \ov{\sigma_r}^{(i)}\right)e_m^r.
\]
It follows that $\sigma' = \sum_{i=1}^s \alpha_1 \sigma^{(i)}$ is an $\NN \times \NN^{n-h}$ which, together with $\sigma^{(2)},\ldots,\sigma^{(s)}$, constitutes a minimal $\NN \times \NN^{n-h}$-graded generating set $\Sigma'$ of $\syz_S(J)$ in degree up to $D+j+1$ such that $W(\Sigma') < W(\Sigma)$. This contradicts our choice of $\Sigma$. Therefore $\psi$ is injective, and the proof is complete.
\end{proof}

\section{Bounds on Betti numbers of prime ideals}  \label{Section bounds}

We start this main section of the article with an observation that, even if rather elementary, will come very handy in the study of generators of prime ideals. We say that an ideal of $S$ is unmixed if all its associated primes are minimal, and they all have the same height. In the next lemma we show that, to compute the graded Betti numbers of an unmixed homogeneous radical ideal of height $h>0$, one can always reduce to studying the Betti numbers of a suitable almost complete intersection, that is, an ideal of height $h$ generated by $h+1$ elements. 
\begin{lemma} \label{Lemma Betti ACI}
Assume that $\kk$ is infinite. Let $I \subsetneq S = \kk[x_1,\ldots,x_n]$ be a homogeneous unmixed radical ideal of height $h<n$, and $\f \subseteq I$ be an ideal generated by a regular sequence with $\Ht(\f) =h$. 
If $I \ne \f$, there exists an integer $D_0$ with the following property: for any $D>D_0$, there exists a homogeneous element $g \in S_D$ such that $\beta_{0,j}(I) = \beta_{1,D+j}(\f+(g))$ for all $j \geq 0$.
\end{lemma}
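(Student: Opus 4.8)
\textbf{Plan for the proof of Lemma \ref{Lemma Betti ACI}.}

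The strategy is to produce the extra generator $g$ by a sufficiently generic choice so that $\f + (g)$ is still height $h$, and so that appending $g$ to $\f$ introduces no spurious syzygies in low degree, while the syzygies that involve $g$ encode exactly the minimal generators of $I$. First I would pick $D_0$ large enough that $I$ is generated in degrees $\le D_0$ and that the minimal free resolution of $S/\f$ (a Koszul complex) has no syzygies past degree $D_0$; concretely I want $D_0 \ge \reg(S/\f)$ and $D_0 \ge \topdeg$ of the generators of $I$. Fix $D > D_0$. Since $\kk$ is infinite and $I/\f$ is a nonzero finitely generated graded module, I can choose $g \in I_D$ whose image in $(I/\f)_D$ is a ``sufficiently general'' element; because $I$ is unmixed radical of height $h$ and $\f$ is a height-$h$ complete intersection inside $I$, the ideal $\f \colon g$ has height $h$ as well (its minimal primes are among those of $\f$ avoided by general $g$), so $\Ht(\f + (g)) = h$ and $g$ is a nonzerodivisor on no minimal prime that would drop the height.

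Next I would compute Betti numbers from the short exact sequence
\[
0 \longrightarrow (\f \colon g)(-D) \stackrel{\cdot g}{\longrightarrow} S/\f \longrightarrow S/(\f+(g)) \longrightarrow 0,
\]
which gives the standard change-of-rings / mapping-cone description: $\beta_{i,j}(S/(\f+(g)))$ fits in a long exact sequence relating it to $\beta_{i,j}(S/\f)$ and $\beta_{i-1, j-D}(S/(\f\colon g))$. The point is that for $D \gg 0$ the shift $-D$ pushes all the Betti numbers of $(\f\colon g)(-D)$ into degrees $> D_0 \ge \reg(S/\f)$, so in those degrees the Tor of $S/\f$ vanishes and the connecting maps force isomorphisms $\Tor_i^S(S/(\f+(g)),\kk)_j \cong \Tor_{i-1}^S(S/(\f\colon g),\kk)_{j-D}$ for $i \ge 2$ and all $j$ in the relevant range, hence $\beta_{1,D+j}(\f+(g)) = \beta_{2,D+j}(S/(\f+(g))) = \beta_{1,j}(S/(\f\colon g)) = \beta_{0,j}(\syz_S(\f\colon g))$ after re-indexing; I need to be careful with the low-$i$ corner of the long exact sequence, where the generator $g$ itself and the Koszul syzygies of $\f$ with $g$ appear, but these all sit in degree $\le D + (\text{something} \le D_0)$ versus the generators of $I$ in degree $\le D_0$, so they do not interfere. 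The final identification is $\f \colon g = I$: the containment $\f \colon g \supseteq I$ requires $g \cdot I \subseteq \f$, which will \emph{not} hold for a single general $g$, so in fact I expect the correct statement to use the reverse setup — one shows $\f \colon g$ equals $I$ as \emph{ideals up to the saturation/height-$h$ part}, or more precisely that for general $g$ one has $\f \colon g = \f \colon I^\infty$-type behavior; the cleanest route is to instead observe that $(\f+(g)) \colon (\f+(g))$-primary decomposition makes $\sqrt{\f+(g)}$ meet $I$ correctly, and then use that $I$ is the unmixed radical to conclude $\f \colon g = I$ on the nose when $g$ avoids all the lower-dimensional ``junk'' components — this is exactly where unmixedness of $I$ is used.

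The main obstacle, and the step I would spend the most care on, is precisely this identification $\f \colon g = I$ for a general $g \in I_D$ with $D \gg 0$: one must show simultaneously that $\f \colon g \subseteq I$ (equivalently, that no element outside $I$ multiplies $g$ into $\f$, which follows because $I$ is radical and height-$h$ unmixed, so $I = \bigcap_{\p \in \Min(\f), \, \p \supseteq I} \p$ and a general $g$ is a nonzerodivisor modulo each such $\p$) and that $\f \colon g \supseteq I$ (equivalently $I \cdot g \subseteq \f$, which is the delicate direction and is where one genuinely needs $D$ large — one needs $(\f \colon I)_{\ge D}$-type vanishing, i.e. that multiplication by a general degree-$D$ form of $I$ lands all of $I$ into $\f$ modulo high-degree corrections, controlled again by $\reg$). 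Once $\f \colon g = I$ is established, the homological bookkeeping above is routine, and $\beta_{0,j}(\syz_S(I)) = \beta_{1,j}(I) $ re-indexes to the claimed $\beta_{0,j}(I) = \beta_{1,D+j}(\f+(g))$ after the degree shift by $D$ (here using that $\beta_{1,D+j}(\f+(g)) = \beta_{2,D+j}(S/(\f+(g)))$ and the change-of-rings isomorphism lands it at $\beta_{1,j}(S/I) = \beta_{0,j}(I)$).
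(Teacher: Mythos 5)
Your homological skeleton --- the short exact sequence connecting $S/\f$ and $S/(\f+(g))$, the long exact sequence of $\Tor(-,\kk)$, and the vanishing of $\Tor_1^S(S/\f,\kk)_m$ and $\Tor_2^S(S/\f,\kk)_m$ for $m>D_0$ because the Koszul complex resolving $S/\f$ stops in bounded degree --- is correct and is exactly what the paper uses. One small slip: the map $(\f:g)(-D)\xrightarrow{\cdot g}S/\f$ in your short exact sequence is identically zero by definition of the colon ideal; the intended left-hand term is $(S/(\f:g))(-D)$, and then $0\to (S/(\f:g))(-D)\xrightarrow{\cdot g}S/\f\to S/(\f+(g))\to 0$ is always exact.

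The genuine gap, which you flag but do not close, is the choice of $g$. You take $g$ general in $I_D$ and want $\f:g=I$, but $I$ is the wrong ideal to draw $g$ from. You correctly observe that $gI\subseteq\f$ fails for general $g\in I_D$, and your argument for the other inclusion $\f:g\subseteq I$ also breaks: a general $g\in I_D$ lies \emph{inside} every minimal prime $\p$ of $I$, so it is a zerodivisor (not a nonzerodivisor) modulo $\p$; and if some $P\in\Min(\f)\setminus\Min(I)$ exists, a general $g\in I_D$ is a unit at $P$, hence $(\f:g)_P=\f_P\ne S_P$ and $P\in\Ass(S/(\f:g))$ even though $P\notin\Ass(S/I)$. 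The fix, and what the paper actually does, is to pick $g$ not in $I$ but in the linked ideal $\f:I$. Since $\f$ is a complete intersection of height $h$ and $I$ is radical and unmixed of the same height, for each $P\in\Min(I)\subseteq\Min(\f)$ the ring $S_P/\f_P$ is Artinian Gorenstein with one-dimensional socle; a homogeneous $g\in\f:I$ that maps to a socle generator at each such $P$, and lies in $\f_P$ at each $P\in\Min(\f)\setminus\Min(I)$, satisfies $\f:g=I$ exactly. To make $\deg g=D$ for an arbitrary $D>D_0$, replace $g$ by $g\ell^{D-\deg g}$ where $\ell$ is a linear form regular modulo $I$ (this is where $\kk$ infinite and $\depth(S/I)>0$ are used); this preserves $\f:g=I$ because $\f:(g\ell^N)=(\f:g):\ell^N=I:\ell^N=I$. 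Once $g$ is chosen this way the rest of your argument goes through.
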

\begin{proof}
Let $D' = \max\{j \in \ZZ \mid \beta_{0,j}(\f) \ne 0 \text{ or } \beta_{1,j}(\f) \ne 0\}$. Observe that the minimal primes of $I$ are contained in the minimal primes of $\f$. Since $I$ is radical, unmixed and homogeneous, we can find a homogeneous element $g \in S$ such that $I = \f:g$. 
Let $D_0= \max\{D',\deg(g)\}$, and fix an integer $D>D_0$. Since ${\rm depth}(S/I)>0$ and $\kk$ is infinite, we can find a linear form $\ell$ which is regular modulo $I$. In particular, $I=\f:g\ell^N$ for any $N \geq 0$. After replacing $g$ with $g\ell^{D-\deg(g)}$, we may assume without loss of generality that $D= \deg(g)$. Let $\a = \f+(g)$. After applying the functor $-\otimes_S \kk$ to the graded exact sequence $0 \to S/I(-D) \to S/\f \to S/\a \to 0$, and looking at the component of degree $m$ in the long exact sequence of $\Tor_\bullet^S(-,\kk)$ modules, we get an exact sequence of $\kk$-vector spaces
\[
\Tor_2^S(S/\f,\kk)_m \to \Tor_2^S(S/\a,\kk)_m \to \Tor_1^S(S/I,\kk)_{m-D} \to \Tor_1^S(S/\f,\kk)_m.
\]
For $j\geq 0$ and $m =j+D > D_0$ we have that $\Tor_2 ^S(S/\f,\kk)_m = \Tor_1^S(S/\f,\kk)_m = 0$. It follows that $\beta_{0,j}(I) = \dim_{\kk}(\Tor_1^S(S/I,\kk))_j = \dim_{\kk}(\Tor_2^S(S/\a,\kk))_{j+D} = \beta_{1,j+D}(\a)$, as claimed.
\end{proof}


We are finally ready to prove our main theorem.

\begin{theorem} \label{THM generators} Let $I \subsetneq S$ be a homogeneous unmixed radical ideal of height $h$. For all $j \geq 0$ we have that
\[
\beta_{0,j}(I) \leq h^{2^{j+1}-3}.
\]
\end{theorem}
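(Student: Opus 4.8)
The plan is to prove the bound by induction on the height $h$, using the machinery developed in the preceding sections to pass from an ideal of height $h$ to one of height $h-1$ by cutting with a general linear form and saturating. The base case $h=0$ is trivial since then $I=(0)$ (an unmixed radical ideal of height $0$ in a domain is zero), so $\beta_{0,j}(I)=0$ for all $j$, and the bound $h^{2^{j+1}-3}$ is vacuously fine once we interpret it correctly (or we may start the induction at $h=1$, where $I$ is a principal ideal generated by a squarefree polynomial and $\beta_{0,j}(I)\le 1$). For the inductive step, first I would reduce to the case $\kk$ infinite by a flat base change to $\kk(t)$, which changes neither the height nor the graded Betti numbers. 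Then, given $I$ of height $h$, the strategy is: choose a regular sequence $f_1,\ldots,f_h\in I$ of some degrees $d_1\le\cdots\le d_h$; apply Lemma \ref{Lemma Betti ACI} to produce, for $D\gg 0$, an element $g\in S_D$ so that $\beta_{0,j}(I)=\beta_{1,D+j}(\f+(g))$ where $\f=(f_1,\ldots,f_h)$; then bound $\beta_{1,D+j}(\f+(g))$ using the $\Lambda$-construction and Proposition \ref{Prop upper bound}.

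**The heart of the argument** is to put the almost complete intersection $\a=\f+(g)$ in general coordinates and pass to its initial ideal with respect to the preorder $\preccurlyeq_h$ of Section \ref{Section Buchberger}. By Lemma \ref{Lemma Initial CI}, after a general linear change of coordinates we may assume $f_1,\ldots,f_h,x_{h+1},\ldots,x_n$ is a full regular sequence, so ${\rm in}_h(\f)=({\rm in}_h(f_1),\ldots,{\rm in}_h(f_h))$ and the $\varphi_i:={\rm in}_h(f_i)$ form a regular sequence in $\ov S=\kk[x_1,\ldots,x_h]$. Since passing to an initial ideal can only increase graded Betti numbers, it suffices to bound $\beta_{1,D+j}({\rm in}_h(\a))$. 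Now ${\rm in}_h(\a)$ is $\NN\times\NN^{n-h}$-graded (by the Remarks in Section \ref{Section Lambda}), generated by $\varphi_1,\ldots,\varphi_h$ together with finitely many $\NN\times\NN^{n-h}$-homogeneous elements $\gamma_1,\gamma_2,\ldots$, and the key point is to control, degree by degree starting from $D$, how many new minimal generators $\gamma_i$ appear and what their associated monomials $m_i$ look like. Proposition \ref{Prop upper bound} tells us that the number of minimal generators of ${\rm in}_h(\a)$ in degree $D+k+1$ — and the number of degree-$(D+k+1)$ first syzygies — is at most $\beta_{0,D+k+1}(\syz_S(\Gamma))$ where $\Gamma$ is the list of initial forms already computed up through degree $D+k$; and $\syz_S(\Gamma)$ is exactly the kind of object the $\Lambda$-construction is designed to bound. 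Concretely, I would run the induction on $k=0,1,\ldots,j$: let $\Gamma_k$ be the list of $\NN\times\NN^{n-h}$-graded initial forms (including the $\varphi_i$) of degree $\le D+k$; the number of degree-$(D+k+1)$ generators is at most $\lambda_{D+k+1}(\Gamma_k)$ by combining Proposition \ref{Prop upper bound}, Proposition \ref{Prop bound lambda}, and the structure of $\syz$; and then Lemma \ref{Lemma lambda count} bounds $\lambda_{D+k+2}(\Gamma_{k+1})$ in terms of $\lambda_{D+k+1}(\Gamma_k)$ via $\lambda\mapsto 2\lambda^2+(2h-1)\lambda$.

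**Thus the recursion to solve** is roughly $\lambda_0\le\binom{h}{2}+\text{(contribution of $g$)}$ at the starting degree $D$ (the $\binom{h}{2}$ coming from the Koszul syzygies of the regular sequence $\varphi_1,\ldots,\varphi_h$, which have degree $<D$, while $g$ contributes $1$ generator in degree $D$), and then $\lambda_{k+1}\le 2\lambda_k^2+(2h-1)\lambda_k\le 3\lambda_k^2$ for each step, so that $\lambda_j\le C\cdot(\text{something})^{2^j}$; unwinding and simplifying the constants against the target $h^{2^{j+1}-3}$ is the bookkeeping that makes the exponent come out right. Here I should be careful that the starting quantity $\lambda_0$ is genuinely bounded in terms of $h$ alone and not in terms of $D$ or the degrees $d_i$ — this is exactly the subtle point flagged in the introduction, and it works precisely because in the $\NN\times\NN^{n-h}$-graded world the module $\syz_S(\Gamma)$ splits (cf.\ the commented-out Lemma on syzygy isomorphisms) into contributions indexed by the finitely many monomials $m_i$, each contribution being a syzygy module over $\ov S=\kk[x_1,\ldots,x_h]$ of a regular sequence plus one extra element, which has at most $\binom{h}{2}+(\text{bounded})$ generators in each degree regardless of the degrees involved.

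**The main obstacle** I anticipate is the careful bookkeeping in the inductive step: making sure that at each degree $D+k$ the list $\Gamma_k$ really does consist of $\NN\times\NN^{n-h}$-homogeneous elements to which Proposition \ref{Prop upper bound} and the $\Lambda$-machinery apply, that the hypothesis $D>\max\{j:\beta_{1,j}(\varphi_1,\ldots,\varphi_h)\ne0\}$ of Proposition \ref{Prop bound lambda} is met (which forces us to take $D$ large, justified by Lemma \ref{Lemma Betti ACI}), and above all that the count $\lambda_0$ at the bottom degree $D$ is bounded purely in terms of $h$ — since a priori $g$ could interact with the $\varphi_i$ in ways that produce many syzygies, and one must check these are all accounted for by the $\binom{h}{2}$ Koszul syzygies plus the single generator $g$. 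Once the recursion $\lambda_{k+1}\le 2\lambda_k^2+(2h-1)\lambda_k$ with $\lambda_0$ of size $O(h^2)$ is in hand, a short induction gives $\lambda_j\le h^{2^{j+1}-3}$, and since $\beta_{0,j}(I)=\beta_{1,D+j}(\a)\le\beta_{1,D+j}({\rm in}_h(\a))\le\lambda_{D+j}(\Gamma_{D+j-1})$, wait — more precisely $\le\lambda_j$ after reindexing so that degree $D$ corresponds to step $0$ — we obtain the theorem.
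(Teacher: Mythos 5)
Your opening sentence announces an induction on the height $h$ via ``cutting with a general linear form and saturating,'' which is precisely the Bertini-style argument the paper sets out to \emph{avoid}: over a non-algebraically-closed $\kk$, that step may introduce linear forms and so fails. Fortunately the rest of your proposal does not actually pursue this plan: after the reduction to $\kk$ infinite and the almost-complete-intersection reduction (Lemma~\ref{Lemma Betti ACI}), you switch to a degree-by-degree induction driven by $\preccurlyeq_h$, the $\Lambda$-construction, and the recursion $\lambda \mapsto 2\lambda^2+(2h-1)\lambda$ from Lemma~\ref{Lemma lambda count}. That skeleton is the paper's, and the key inputs (Lemma~\ref{Lemma Initial CI}, Proposition~\ref{Prop upper bound}, Proposition~\ref{Prop bound lambda}) are identified correctly.

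The genuine gap is in the base of the recursion and, underlying it, in what you feed into $\Lambda$. You take $\Gamma_k$ to include the $\varphi_i={\rm in}_h(f_i)$, and you set the starting value $\lambda_0 \le {h\choose 2}+1$ (``Koszul syzygies plus $g$''). Neither of these is right. First, if you literally put $\varphi_1,\ldots,\varphi_h$ into the list defining $\Lambda$, then the graded pieces $\dim_\kk\ov{S}_{D+1-\deg\varphi_i}$ appear, and those depend on $D-d_i$, which is not controlled by $h$ at all; you would be bounding $\lambda$ by a quantity depending on $D$ and on the degrees of the regular sequence, which is exactly the failure mode the introduction warns against. Second, the ${h\choose 2}$ Koszul syzygies live in degree $<D$ (by the very choice $D>\max\{j:\beta_{1,j}(\varphi_1,\ldots,\varphi_h)\ne 0\}$ built into Lemma~\ref{Lemma Betti ACI} and Proposition~\ref{Prop bound lambda}), so they contribute \emph{nothing} in degrees $\ge D+1$; you even note they have degree $<D$ and then include them anyway, which is internally inconsistent. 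In the paper, $\Gamma$ consists \emph{only} of the new generators $\gamma_i$ (not the $\varphi_i$), the complete-intersection part is handled by the condition $D\gg 0$ inside Proposition~\ref{Prop bound lambda}, and the base case is $\Gamma=\{{\rm in}_h(g)\}$ with $\lambda_{D+1}(\Gamma)=\dim_\kk \ov{S}_1=h=h^{2^{1+1}-3}$. From the base $h$, the recursion $\lambda\mapsto 2\lambda^2+(2h-1)\lambda$ yields exactly $h^{2^{j+1}-3}$. From your base of size $\Theta(h^2)$, squaring $j$ times gives roughly $h^{2^{j+2}}$, so your closing claim that ``a short induction gives $\lambda_j\le h^{2^{j+1}-3}$'' does not follow as stated and would need the smaller starting value.
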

\begin{proof}
The cases $h \leq 1$ or $j=0$ are trivially satisfied, therefore we will assume that $h \geq 2$ and $j \geq 1$. The case in which $h=n = \dim(S)$ is also trivial, since in this case $I$ is forced to be equal to $(x_1,\ldots,x_n)$ and the claimed bound is satisfied. Assume that $h<n$. A suitable extension of the base field does not affect our assumptions and the desired conclusion, therefore we may assume that $\kk$ is infinite. We may find an ideal $\f \subseteq I$, with $\Ht(\f) = h$, generated by a homogeneous regular sequence of degrees $d_1 \leq \ldots \leq d_h$. If $I = \f$, then $\beta_{0,j}(I) \leq h$ for every $j \geq 1$, and the claimed bound is satisfied. Assume that $I \ne \f$. By Lemma \ref{Lemma Betti ACI}, for $D \gg 0$ we can find a homogeneous element $g \notin \f$ of degree $D$ such that $\beta_{0,j}(I)  = \beta_{1,D+j}(\a)$, where $\a=\f+(g)$ is an almost complete intersection of height $h$. In particular, as seen in the proof of Lemma \ref{Lemma Betti ACI}, we assume that $D > \max\{j \in \ZZ \mid \beta_{1,j}(\f) \ne 0\}$.

By upper semi-continuity it suffices to prove that $\beta_{1,D+j}({\rm in}_h(\a)) \leq h^{2^{j+1}-3}$. After a sufficiently general change of coordinates, if we let $\varphi_1={\rm in}_h(f_1),\ldots,\varphi_h={\rm in}_h(f_h) \in \ov{S}$, then by Lemma \ref{Lemma Initial CI} we may assume that  
$\varphi_1,\ldots,\varphi_h,x_{h+1},\ldots,x_n$ form a regular sequence. 


We prove by induction on $j \geq 1$ that there exists a list $\Gamma$ of $\NN \times \NN^{n-h}$-graded elements that generate ${\rm in}_h(\a)$ in degrees between $D$ and $D+j-1$, and such that $\lambda_{D+j}(\Gamma) \leq h^{2^{j+1}-3}$. It will then follow from Propositions \ref{Prop upper bound} and \ref{Prop bound lambda} that $\beta_{1,D+j}({\rm in}_h(\a)) \leq h^{2^{j+1}-3}$, and this will conclude the proof.


The base case $j=1$ is satisfied, since we can choose $\Gamma = \{{\rm in}_h(g)\}$, and it follows that $\lambda_{D+1}(\Gamma) = h = h^{2^{j+1}-3}$. 

Let $j \geq 2$, and assume that the claimed inequality is true for $j-1$. Let $\Gamma_1$ be a set of minimal $\NN \times \NN^{n-h}$-graded generators of ${\rm in}_h(\a)$ in degrees between $D$ and $D+j-2$, and $\Gamma_2$ be minimal $\NN \times \NN^{n-h}$-graded generators of ${\rm in}_h(\a)$ of degree precisely equal to $D+j-1$. Let $\Gamma$ be the list obtained by appending $\Gamma_2$ to $\Gamma_1$. It follows from Proposition \ref{Prop bound lambda} that $|\Gamma_2| = \beta_{0,D+j-1}({\rm in}_h(\a)) \leq \lambda_{D+j-1}(\Gamma_1)$. 
By Lemma \ref{Lemma lambda count} we have that $\lambda_{D+j}(\Gamma) \leq 2\lambda_{D+j-1}(\Gamma_1)^2 + (2h-1)\lambda_{D+j-1}(\Gamma_1)$. Since $\lambda_{D+j-1}(\Gamma_1) \leq h^{2^j-3}$ by induction, and because $2^j-1 \leq 2^{j+1}-5$ for $j \geq 2$, we conclude that 
\begin{align*}
\lambda_{D+j}(\Gamma) & \leq 2\left(h^{2^j-3}\right)^2 + (2h-1)h^{2^j-3} \leq   2h^{2^{j+1}-6} + h^2h^{2^j-3} \\
& \leq h^{2^{j+1}-5} + h^{2^j-1} \leq 2h^{2^{j+1}-5} \leq h^{2^{j+1}-3}. \qedhere
\end{align*}
\end{proof}

In the case of quadrics, Theorem \ref{THM generators} provides $h^{5}$ as an upper bound, which is definitely larger compared to the one of ${h+1 \choose 2}$ given by Castelnuovo's Theorem. The following example shows that one cannot expect the bound of Castelnuovo to hold for unmixed radical ideals. 

\begin{example} \label{Ex radical h2} For any $h \geq 1$, the ideal $(x_1,\ldots,x_h) \cap (y_1,\ldots,y_h) \subseteq S=\kk[x_1,\ldots,x_h,y_1,\ldots,y_h]$ is unmixed and radical, and it is minimally generated by the $h^2$ monomials $\{x_iy_j \mid 1 \leq i,j \leq h\}$.
\end{example}
 
If one does not assume that $\kk$ is algebraically closed, the upper bound of Castelnuovo's theorem does not even hold for prime ideals.

\begin{example} \label{Ex prime real numbers} Let $S=\RR[a,b,c,d]$ and $P=(a^2+c^2,b^2+d^2,ad-bc,ab+cd)$. Observe that $S/P \cong \RR[x,y,ix,iy]$, therefore $P$ is a prime ideal of height two. However, $P$ is minimally generated by $4>{3 \choose 2}$ quadrics.
\end{example}

The next theorem provides a more refined upper bound for the number of quadratic minimal generators of {\it any} unmixed radical ideal in a standard graded polynomial ring over {\it any} field. In order to achieve this, we run a more careful analysis of the first few inductive steps in the proof of Theorem \ref{THM generators}.

\begin{theorem} \label{THM quadrics}
Let $I \subsetneq S$ be an unmixed radical ideal of height $h$. Then $\beta_{0,2}(I) \leq 2h^2 + h$.
\end{theorem}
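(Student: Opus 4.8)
The strategy is to mimic the proof of Theorem \ref{THM generators} but to track the quantities $\lambda_{D+j}(\Gamma)$ for small $j$ by hand rather than invoking the crude recursion of Lemma \ref{Lemma lambda count}. As in that proof, we may assume $\kk$ is infinite, $h \geq 2$, and $h < n$ (the degenerate cases being trivial or giving $\beta_{0,2}(I) \leq h \leq 2h^2+h$). We reduce via Lemma \ref{Lemma Betti ACI} to an almost complete intersection $\a = \f + (g)$ with $\f = (f_1,\ldots,f_h)$ a regular sequence of degrees $d_1 \leq \ldots \leq d_h$, $\deg(g) = D \gg 0$, so that $\beta_{0,2}(I) = \beta_{1,D+2}(\a)$, and after a general change of coordinates we assume $\varphi_1 = {\rm in}_h(f_1),\ldots,\varphi_h = {\rm in}_h(f_h),x_{h+1},\ldots,x_n$ form a regular sequence (Lemma \ref{Lemma Initial CI}). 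By upper semicontinuity it suffices to bound $\beta_{1,D+2}({\rm in}_h(\a))$, and by Propositions \ref{Prop upper bound} and \ref{Prop bound lambda} this is at most $\lambda_{D+2}(\Gamma)$ for a list $\Gamma$ of $\NN \times \NN^{n-h}$-graded generators of ${\rm in}_h(\a)$ in degrees $D$ and $D+1$.

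\textbf{Main computation.} So the heart of the matter is: choosing $\Gamma_1 = \{{\rm in}_h(g)\}$ (which is $h$-dimensional in the $\Lambda$-count, i.e. $\lambda_{D+1}(\Gamma_1) = h$) and appending $\Gamma_2$, a minimal set of $\NN\times\NN^{n-h}$-graded generators of ${\rm in}_h(\a)$ in degree $D+1$, with $|\Gamma_2| = \beta_{0,D+1}({\rm in}_h(\a)) \leq \lambda_{D+1}(\Gamma_1) = h$ by Proposition \ref{Prop bound lambda}. Now I would directly expand $\Lambda(\Gamma)_{D+2}$ exactly as in the proof of Lemma \ref{Lemma lambda count}, but keeping the actual combinatorial data instead of bounding pairs of monomials by $\binom{\lambda}{2}$. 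The key gain: the single degree-$D$ element ${\rm in}_h(g)$ contributes a monomial $m_0$; each of the $\leq h$ degree-$(D+1)$ generators contributes a monomial $m_r = m_0 x_{k_r}$ or $m_r = m_0$ (since its total degree exceeds that of ${\rm in}_h(g)$ by exactly $1$, and ${\rm in}_h(g)$ itself must divide the relevant lcm's by minimality), so the least common multiples appearing in $L_r$ for $r$ ranging over $\Gamma_2$ are very constrained — they are lcm's of at most $h$ monomials each differing from $m_0$ by at most one variable. Counting: the ``$V$'' part gives $\leq h\lambda_{D+1}(\Gamma_1) + (\text{new lcm's in degree } D+2 \text{ from } \Gamma_1)$, but $\Gamma_1$ has a single element so there are no such new lcm's, giving just $h^2$; the ``$W$'' part gives $h|\Gamma_2| \leq h^2$ from the $\ov S_1 e_{m_r}^r$ summands, plus lcm's between a degree-$D$ element and the $\Gamma_2$-monomials (bounded by $|\Gamma_1|\cdot|\Gamma_2| \leq h$ but landing in degree $D+2$, hence contributing at most $h$ one-dimensional summands) plus lcm's among pairs from $\Gamma_2$ (bounded by $\binom{|\Gamma_2|}{2} \leq \binom{h}{2}$). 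Carefully bookkeeping which of these land in degree exactly $D+2$ versus degree $\leq D+1$ should yield $\lambda_{D+2}(\Gamma) \leq 2h^2 + h$, possibly after noting that $|\Gamma_2| \leq h$ forces $|L_r| \leq r \leq h$ with most lcm's coinciding.

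\textbf{Expected obstacle.} The delicate point is the precise accounting of which least common multiples genuinely occur and in which degree, and making sure no double-counting inflates the estimate past $2h^2 + h$. Because $\lambda_{D+2}(\Gamma)$ as bounded by Lemma \ref{Lemma lambda count} would be $2h^2 + (2h-1)h = 4h^2 - h$, roughly a factor of $2$ too big, the improvement must come from exploiting that the degree-$D$ stratum is a \emph{single} element: this kills the ``first summand of $V$'' contribution of the form $\binom{\lambda_j}{2}$ (there are no pairs of degree-$\leq D$ lcm's) and collapses the cross-terms. I would also double-check the edge contributions where a monomial $m_r$ for $r \in \Gamma_2$ equals $m_0$ itself (no new variable), since then the lcm combinatorics degenerates further and can only help. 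If the straightforward count stalls at, say, $2h^2 + 2h$ or $3h^2$, the fix will be a sharper bound on $|\Gamma_2|$ using the regular-sequence hypothesis on $\varphi_1,\ldots,\varphi_h,x_{h+1},\ldots,x_n$ — for instance the degree-$(D+1)$ part of ${\rm in}_h(\a)/{\rm in}_h(\f)$ is cyclic over $\ov S$ generated by the image of ${\rm in}_h(g)$, which should limit $\beta_{0,D+1}$ to something like $n-h$ new variables times a bounded factor, but in fact to $\leq h$ by Proposition \ref{Prop bound lambda} as already used.
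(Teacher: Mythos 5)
Your overall approach is the same as the paper's: reduce to an almost complete intersection, pass to ${\rm in}_h(\a)$, and bound $\beta_{1,D+2}({\rm in}_h(\a))$ by $\lambda_{D+2}(\Gamma)$ using Propositions \ref{Prop upper bound} and \ref{Prop bound lambda}, with $\Gamma_1 = \{{\rm in}_h(g)\}$ and $\Gamma_2$ the degree-$(D+1)$ generators. You also correctly identified that the improvement over Lemma \ref{Lemma lambda count}'s crude bound must come from the degree-$D$ stratum being a single element. But you misplaced exactly where the savings comes from, and as a result your arithmetic does not close.

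Tracing your count: you bound the $V$ contribution by $h\lambda_{D+1}(\Gamma_1) = h^2$ (and note the lcm pairs from $\Gamma_1$ vanish, which is true but already subsumed), and the $W$ contribution by $h|\Gamma_2| + |\Gamma_1|\cdot|\Gamma_2| + \binom{|\Gamma_2|}{2} \leq h^2 + h + \binom{h}{2}$. This gives a total of $2h^2 + h + \binom{h}{2}$, which is strictly larger than $2h^2 + h$ for $h \geq 2$. The missing observation is that with $|\Gamma_1| = 1$ the space $V$ is not just bounded by $h\lambda_{D+1}(\Gamma_1)$ but is \emph{exactly} $\ov{S}_2 \cdot m_1$, so $\dim_\kk V = \binom{h+1}{2}$. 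The generic Lemma \ref{Lemma lambda count} inequality $\dim_\kk \ov{S}_{k+1} \leq h\dim_\kk \ov{S}_k$ is loose by $\binom{h}{2}$ at $k=1$, and this savings of $h^2 - \binom{h+1}{2} = \binom{h}{2}$ is precisely what cancels the $\binom{|\Gamma_2|}{2}$ term in your $W$ bound, yielding $\binom{h+1}{2} + h^2 + h + \binom{h}{2} = 2h^2 + h$. Your speculation that "the fix will be a sharper bound on $|\Gamma_2|$" is a wrong turn — $|\Gamma_2| \leq h$ is already the right estimate; the sharpening needed is in $V$, not in $|\Gamma_2|$.
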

\begin{proof}
As in the proof of Theorem \ref{THM generators}, we may assume that $h \geq 2$ and that $\kk$ is infinite. 

Let $\a=(f_1,\ldots,f_h,g)$ be an almost complete intersection of degrees $d_1 \leq \ldots \leq d_h \ll D$ such that $\beta_{0,j}(I) = \beta_{1,D+j}(\a)$ for all $j$, constructed as in Lemma \ref{Lemma Betti ACI}. After performing a sufficiently general change of coordinates, our goal is to prove that $\beta_{1,D+2}({\rm in}_h(\a)) \leq 2h^2+h$. 

Let $g_1=g, g_2,\ldots,g_t$ be homogeneous elements of $S$ such that, if $\varphi_i = {\rm in}_h(f_i)$ and $\gamma_i = {\rm in}_h(g_i)$, then the elements $\varphi_1,\ldots,\varphi_h,\gamma_1,\ldots,\gamma_t$ minimally generate ${\rm in}_h(\a)$ in degree up to $D+1$. We let $\Gamma_1= \{\gamma_1\}$, $\Gamma_2=\{\gamma_2,\ldots,\gamma_t\}$, and $\Gamma = \{\gamma_1,\gamma_2,\ldots,\gamma_t\}$. Observe that by Proposition \ref{Prop bound lambda} we have that $t-1 = |\Gamma_2| \leq \lambda_{D+1}(\Gamma_1) = h$.

 Write $\gamma_i = \ov{\gamma_i} m_i$ for some monomials $m_i \in \kk[x_{h+1},\ldots,x_n]$ and homogeneous elements $\ov{\gamma}_i \in \ov{S}$. 


We have seen in the proof of Lemma \ref{Lemma lambda count} that $\lambda_{D+2}(\Gamma)$ is bounded above by the contributions of two vector spaces, $V$ and $W$. In the current notation we have that $V=\ov{S}_2 m_1$, and thus $\dim_\kk(V) = {h+1 \choose 2}$. On the other hand, we have already seen in the proof of Lemma \ref{Lemma lambda count} that $\dim_\kk(W) \leq  h \lambda_{D+1}(\Gamma_1) + {\lambda_{D+1}(\Gamma_1) \choose 2} + |\Gamma_2| \cdot |\Gamma_1| \leq h^2 + {h \choose 2} + h$. We conclude by Proposition \ref{Prop bound lambda} that $\beta_{1,D+2}({\rm in}_h(\a)) \leq \lambda_{D+2}(\Gamma) \leq 2h^2+h$.
\end{proof}
We conclude the article by providing an upper bound on the Betti numbers $\beta_{i,j}(I)$ of any radical ideal $I$ which only depends only on $i,j$ and its bigheight, i.e., on the largest height among the minimal primes of $I$. In particular, if $P$ is a prime ideal, $\beta_{i,j}(P)$ can be bounded above by a function depending only on $i,j$ and the height of $P$. 

We first show that, if we know an upper bound on the number of generators of an ideal $I$ in every degree, then we can accordingly bound the graded Betti numbers $\beta_{i,j}({\rm in}_{\preccurlyeq}(I))$ of its initial ideal with respect to {\it any} monomial order $\preccurlyeq$. This fact is embedded in the proof of the next result.


\begin{proposition} \label{Prop Betti} Let $I \subsetneq S$ be a homogeneous ideal and let $a,h \geq 1$ be integers such that $\beta_{0,j}(I) \leq h^{2^{j+a}-3}$ for all $j \geq 0$. 
For all $i,j \geq 0$ we have that
\[
\displaystyle \beta_{i, \leq i+j}(I) \leq {h^{2^{a+j+1}-3} \choose i+1}.
\]
\end{proposition}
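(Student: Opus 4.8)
The plan is to dominate the graded Betti numbers of $I$ by those of a monomial ideal having few minimal generators, and then to resolve that monomial ideal by its Taylor complex. Throughout I would work with the order $\preccurlyeq_h$ of Section~\ref{Section Lambda} (and may assume $h<n$), so that ${\rm in}_h(I)$ is $\NN\times\NN^{n-h}$-graded and Algorithm~\ref{BAlgorithm}, Lemma~\ref{Lemma reduced GB single elt}, Lemma~\ref{Lemma lambda count} and Proposition~\ref{Prop bound lambda} are all available for it.

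First, by upper semicontinuity along the Gröbner degeneration (exactly as in the proof of Theorem~\ref{THM generators}) one has $\beta_{i,t}(I)\leq\beta_{i,t}({\rm in}_h(I))$ for all $i,t$, so it suffices to bound $\beta_{i,\leq i+j}({\rm in}_h(I))$. Since $M:={\rm in}_h(I)/({\rm in}_h(I))_{\leq j+1}$ is generated in degrees $\geq j+2$, its minimal free resolution forces $\beta_{p,t}(M)=0$ whenever $t\leq j+1+p$; plugging this into the long exact sequence of $\Tor_\bullet(-,\kk)$ associated with
\[
0\longrightarrow({\rm in}_h(I))_{\leq j+1}\longrightarrow {\rm in}_h(I)\longrightarrow M\longrightarrow 0
\]
gives $\beta_{i,t}({\rm in}_h(I))=\beta_{i,t}\big(({\rm in}_h(I))_{\leq j+1}\big)$ for every $t\leq i+j$. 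Hence $\beta_{i,\leq i+j}(I)\leq\beta_i\big(({\rm in}_h(I))_{\leq j+1}\big)$, and $({\rm in}_h(I))_{\leq j+1}$ is a monomial ideal whose minimal generators are precisely the minimal monomial generators of ${\rm in}_h(I)$ of degree $\leq j+1$; their number is $N:=\beta_{0,\leq j+1}({\rm in}_h(I))$.

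The heart of the proof would be the estimate $\beta_{0,\leq d}({\rm in}_h(I))\leq h^{2^{d+a}-3}$ for every $d\geq 0$ (up to an inessential multiplicative constant, absorbed at the end), proved by induction on $d$ in the style of the inductive step of Theorem~\ref{THM generators}. For the step I would apply Proposition~\ref{Prop upper bound} to a list $\Gamma$ of homogeneous generators of $I_{\leq d}$ of degrees $\leq d$ whose initial forms generate $({\rm in}_h(I))_{\leq d}$ --- which by induction, together with the hypothesis $\beta_{0,d+1}(I)\leq h^{2^{d+1+a}-3}$, can be taken with $|\Gamma|\leq h^{2^{d+a}-3}$ --- so that $\beta_{0,d+1}$ of the relevant initial ideal is bounded by $\beta_{0,d+1}(\syz_S(\Gamma))$, which Lemma~\ref{Lemma lambda count} and Proposition~\ref{Prop bound lambda} bound in turn by a quantity of the shape $2\lambda_d(\Gamma)^2+(2h-1)\lambda_d(\Gamma)$. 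Passing from $I_{\leq d}$ to $I_{\leq d+1}$ contributes at most $\beta_{0,d+1}(I)$ further new minimal generators of ${\rm in}_h(I)$ in degree $d+1$; adding this term and solving the recursion closes the induction, and yields $N\leq h^{2^{a+j+1}-3}$.

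Finally, the Taylor complex of the monomial ideal $J:=({\rm in}_h(I))_{\leq j+1}$ is a (generally non-minimal) graded free resolution of $S/J$ with $\binom{N}{p}$ free generators in homological degree $p$, so $\beta_i(J)=\beta_{i+1}(S/J)\leq\binom{N}{i+1}\leq\binom{h^{2^{a+j+1}-3}}{i+1}$; combining with the previous paragraph gives $\beta_{i,\leq i+j}(I)\leq\binom{h^{2^{a+j+1}-3}}{i+1}$. The main obstacle is the inductive generator count: transferring the degreewise bound on $\beta_{0,\bullet}(I)$ to one on $\beta_{0,\bullet}({\rm in}_h(I))$, and verifying that the quadratic recursion coming from Lemma~\ref{Lemma lambda count}, fed at each new degree by the fresh generators of $I$, still lands on the exponent $2^{a+j+1}-3$ after $j+1$ steps, with the inessential constants swallowed by the binomial coefficient. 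The reduction to a monomial ideal and the Taylor complex estimate are, by contrast, routine.
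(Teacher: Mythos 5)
Your outer structure is right: reduce to the initial ideal, truncate, and invoke the Taylor complex. The truncation argument (using the short exact sequence with the tail quotient $M$ and the strand condition $\beta_{p,t}(M)=0$ for $t\leq j+1+p$) is correct, and the Taylor complex step is exactly what the paper does. The paper works with $J_{\leq j}$ rather than your $J_{\leq j+1}$ --- a sharper truncation that is what makes the exponent $2^{a+j+1}-3$ come out with no slack --- but this is a cosmetic difference.

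The real gap is in the inductive estimate $\beta_{0,\leq d}(\operatorname{in}(I))$. You propose to run the machinery of Section \ref{Section Lambda}: work with $\preccurlyeq_h$, pass to the $\NN\times\NN^{n-h}$-grading, and invoke Lemma \ref{Lemma lambda count} and Proposition \ref{Prop bound lambda}. That machinery is not available here. Proposition \ref{Prop bound lambda} requires the ideal to be generated by a bunch of low-degree forms $\varphi_1,\ldots,\varphi_h\in\ov{S}$ whose first syzygies all sit below $D$, together with high-degree elements $\gamma_1,\ldots,\gamma_t$ of degrees in $[D,D+j]$. That split is a consequence of the almost-complete-intersection construction in Lemma \ref{Lemma Betti ACI}, and it is what makes the key step of that proof (every minimal syzygy $\sigma$ of degree $D+j+1$ has $W(\sigma)>h$, so it can be ``projected'' onto the $\gamma$'s) work. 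In Proposition \ref{Prop Betti} the ideal $I$ is arbitrary; there is no regular sequence, no dichotomy of degrees, and in fact $h$ is not the height of anything, merely a parameter in the hypothesis $\beta_{0,j}(I)\leq h^{2^{j+a}-3}$. So the choice of $\preccurlyeq_h$ (and with it the ring $\ov{S}=\kk[x_1,\ldots,x_h]$) is unmotivated, and the $\Lambda$-construction has no foothold. Your acknowledged ``inessential multiplicative constant'' is also not actually inessential once it sits inside $\binom{\cdot}{i+1}$.

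The paper's argument for the $i=0$ part is considerably simpler and avoids all of this: it takes an \emph{arbitrary monomial order} (not a weight or matrix preorder), so that in Buchberger's algorithm every $S$-pair involves exactly two polynomials at a time. If $\Gamma$ is a Gr\"obner basis of $I$ in degrees $\leq j$, then every minimal monomial generator of $\operatorname{in}(I)$ in degree $j+1$ is either the leading term of the remainder of an $S$-pair from two elements of $\Gamma$, or the leading term of a genuine degree-$(j+1)$ minimal generator of $I$. This gives
\[
\beta_{0,j+1}(\operatorname{in}(I)) \;\leq\; \binom{\beta_{0,\leq j}(\operatorname{in}(I))}{2} + \beta_{0,j+1}(I),
\]
and a clean induction yields $\beta_{0,\leq j}(\operatorname{in}(I))\leq h^{2^{a+j+1}-3}$ on the nose. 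The two-at-a-time feature of ordinary monomial orders is precisely the observation your plan is missing; the relative Buchberger's algorithm for weight preorders (Algorithm \ref{BAlgorithm}) does not have it, which is why that side of the paper needs the whole $\Lambda$-apparatus. Here it would be both unjustified and unnecessary.
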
 
\begin{proof}
The case $h \leq 1$ is trivially satisfied, therefore we may assume that $h \geq 2$. Let ${\rm in}(-)$ denote the initial ideal with respect to any monomial order, and let $J={\rm in}(I)$. By upper semi-continuity, it suffices to prove the claimed upper bound for $\beta_{i,\leq i+j}(J)$. We first prove the case $i=0$ by induction on $j \geq 0$. The case $j=0$ is trivial. Assume the claim is proved for some $j \geq 0$, and let us show it for $j+1$. Let $\Gamma$ be a set of homogeneous elements of $I$ whose initial forms minimally generate ${\rm in}(I)$ up to degree $j$. By inductive hypothesis, we have that $|\Gamma| = \beta_{0,\leq j}(J) \leq h^{2^{a+j+1}-3}$. Since we are dealing with a monomial order, every S-pair in Buchberger's algorithm involves only two polynomials at a time. As every minimal generator of degree $j$ of $J$ is either the initial form of the reduction of one such S-pair between elements of $\Gamma$, or it is the initial form of a minimal generator of $I$ of degree $j+1$, we conclude by induction that 
\[
\beta_{0,j+1}(J) \leq {h^{2^{a+j+1}-3} \choose 2} + h^{2^{a+j+1}-3} \leq h^{2^{a+j+3}-6} + h^{2^{a+j+1}-3}.
\]
It follows that $\beta_{0,\leq j+1}(J) = \beta_{0,\leq j}(J) + \beta_{0,j+1}(J) \leq h^{2^{a+j+1}-3} + h^{2^{a+j+2}-6} + h^{2^{a+j+1}-3} \leq h^{2^{a+j+2}-3}$.

For the claim about Betti numbers, observe first that, by degree considerations, we have that $\beta_{i,\leq i+j}(J) = \beta_{i,\leq i+j}(J_{\leq j})$. Since the Taylor complex $T_\bullet$ on a generating set of $J_{\leq j}$ is a (possibly non minimal) free resolution $S/J_{\leq j}$, it follows that
\[
\displaystyle \beta_{i,\leq i+j}(J) \leq \rank(T_{i+1}) = {\beta_0(J_{\leq j}) \choose i+1} \leq {h^{2^{a+j+1}-3} \choose i+1}. \qedhere
\]
\end{proof}

Ideals of bigheight one are principal, so this case is trivial. The next result gives an upper bound on the graded Betti numbers of {\it any} radical ideal of bigheight $h\geq 2$.

\begin{theorem} \label{THM Betti}
Let $I \subsetneq S$ be a radical ideal of bigheight $h \geq 2$. For all $i,j \geq 0$ we have that 
\[
\displaystyle \beta_{i, \leq i+j}(I) \leq {h^{2^{j+h}-2} \choose i+2}.
\]
\end{theorem}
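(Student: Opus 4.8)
The plan is to reduce the statement for an arbitrary radical ideal $I$ of bigheight $h$ to the unmixed case already handled by Theorem \ref{THM generators}, and then feed the resulting generator bound into Proposition \ref{Prop Betti}. First I would pass to an unmixed ideal by intersecting the primary (here, prime) components of $I$ of maximal height: writing $I = \bigcap_{\p \in \MIN(I)} \p$, set $I' = \bigcap_{\Ht(\p) = h} \p$, which is an unmixed radical ideal of height exactly $h$. The point is that a generic linear projection, or more directly a comparison of initial ideals, lets one control $\beta_{0,j}(I)$ in terms of $\beta_{0,j}(I')$; alternatively one can avoid $I'$ altogether and argue directly on $I$ by taking a general linear system of parameters for the top-dimensional part. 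The cleanest route, I expect, is: choose a general regular sequence and observe that modding out preserves radicality of the unmixed part, so that Theorem \ref{THM generators} applies to bound $\beta_{0,j}$ of the unmixed radical ideal of height $h$ extracted from $I$.

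Next I would quantify the loss incurred by passing from $I$ to its top-dimensional unmixed part. The lower-dimensional components contribute to the generators of $I$, but their number is again controlled inductively: a radical ideal of bigheight $h$ is an intersection of a height-$h$ unmixed radical ideal with a radical ideal of bigheight at most $h$ supported in smaller dimension, and an intersection $J_1 \cap J_2$ satisfies $\beta_{0,j}(J_1 \cap J_2) \le \beta_{0,j}(J_1) + \beta_{0,j}(J_2) + (\text{something controlled})$ coming from the exact sequence $0 \to J_1 \cap J_2 \to J_1 \oplus J_2 \to J_1 + J_2 \to 0$ together with a bound on $\beta_{1,j}(J_1+J_2)$. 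This is exactly the kind of bookkeeping that produces the shift from the exponent $2^{j+1}-3$ in Theorem \ref{THM generators} to $2^{j+h}-3$ here: each of the at most $h-1$ possible heights below $h$ contributes one more doubling to the tower, and Proposition \ref{Prop Betti} is stated precisely with a free parameter $a$ (here $a = h$) to absorb this. Concretely, I would prove by a secondary induction on the bigheight that $\beta_{0,j}(I) \le h^{2^{j+h}-3}$ for any radical $I$ of bigheight $h \ge 2$, with the base case $h=2$ coming from Theorem \ref{THM generators} (unmixed case) plus the intersection estimate, and the inductive step peeling off the top-dimensional part.

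Once the generator bound $\beta_{0,j}(I) \le h^{2^{j+h}-3}$ is established, the rest is immediate: apply Proposition \ref{Prop Betti} with $a = h$. That proposition gives
\[
\beta_{i,\le i+j}(I) \le \binom{h^{2^{h+j+1}-3}}{i+1},
\]
and one checks the elementary numerical inequality $\binom{h^{2^{h+j+1}-3}}{i+1} \le \binom{h^{2^{j+h}-2}}{i+2}$ for $h \ge 2$ and $i, j \ge 0$, using that $2^{h+j+1}-3 \le 2^{h+j}-2$ is false but that the binomial coefficient grows fast enough in the upper argument to absorb the discrepancy between $i+1$ and $i+2$ — more precisely, $h^{2^{h+j+1}-3} = h^{2 \cdot 2^{h+j}-3} \le (h^{2^{h+j}-2})^2$, and $\binom{N}{i+1} \le \binom{N^2}{i+2}$ holds for all $N \ge 2$. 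This final comparison is a routine computation.

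The main obstacle I anticipate is the intersection estimate: controlling $\beta_{0,j}$ (and the auxiliary $\beta_{1,j}$) of $J_1 \cap J_2$ in terms of the bigheights of $J_1$ and $J_2$ without letting the number of variables $n$ enter. The exact sequence $0 \to J_1 \cap J_2 \to J_1 \oplus J_2 \to J_1+J_2 \to 0$ reduces this to bounding the first syzygies of $J_1 + J_2$, and the technique for that is exactly the one developed in Section \ref{Section Buchberger} and Section \ref{Section Lambda}: replace generators by a minimal Gröbner basis with respect to a revlex-type order $\preccurlyeq_h$, pass to the initial ideal, and run the $\Lambda$-construction of Lemma \ref{Lemma lambda count} and Proposition \ref{Prop bound lambda}. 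The bookkeeping here is where the extra doublings in the exponent come from, and getting the constants to line up so that the clean statement $h^{2^{j+h}-2}$ comes out — rather than some messier tower — will require the same kind of careful arithmetic as in the proof of Theorem \ref{THM generators}.
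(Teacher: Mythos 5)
Your overall strategy — induct on the bigheight, peel off the top-dimensional unmixed part $I_h$, and run the short exact sequence relating $I$ to $J=\bigcap_{\ell<h}I_\ell$, $I_h$, and $L=J+I_h$ — is the same as the paper's, and you correctly identified that Theorem \ref{THM generators} handles the unmixed base case and that Proposition \ref{Prop Betti} should convert generator bounds into higher Betti bounds. But there are two genuine problems with the way you propose to assemble these pieces.

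First, your plan is to establish a \emph{uniform generator bound} $\beta_{0,j}(I)\leq h^{2^{j+h}-3}$ for all radical $I$ of bigheight $h$ by a secondary induction, and only then apply Proposition \ref{Prop Betti} once at the end. This intermediate bound does not close inductively. In the step, the exact sequence gives $\beta_{0,j}(I)\leq \beta_{0,j}(J)+\beta_{0,j}(I_h)+\beta_{1,j}(L)$, and the only handle on $\beta_{1,j}(L)$ is Proposition \ref{Prop Betti} applied to $L$; with $\beta_{0,j}(L)\leq h^{2^{j+(h-1)+1}-3}$ that yields $\beta_{1,\leq 1+j}(L)\lesssim \binom{h^{2^{h+j}-3}}{2}$, which is on the order of $h^{2^{h+j+1}}$. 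This swamps the target $h^{2^{j+h}-3}$ already for $j+h\geq 2$, so the conjectured generator bound for $I$ is not reachable this way (and is likely false). The paper avoids this by \emph{not} seeking a clean generator bound for $I$ itself: it inducts directly on the quantity $\beta_{i,\leq i+j}(I)$ that appears in the statement, keeping the contributions of $J$, $I_h$, and $L$ separate and applying Proposition \ref{Prop Betti} to $I_h$ and to $L$ individually rather than once to $I$.

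Second, even granting the intermediate generator bound, your final arithmetic step fails. You claim $h^{2^{h+j+1}-3}\leq \bigl(h^{2^{h+j}-2}\bigr)^2$, but the right-hand side equals $h^{2^{h+j+1}-4}$, which is smaller by a factor of $h$; and for $i=0$ the asserted comparison $\binom{h^{2^{h+j+1}-3}}{1}\leq\binom{h^{2^{j+h}-2}}{2}$ is simply false for $h\geq 2$. This is not a cosmetic slip: it reflects the structural issue above, namely that a single global application of Proposition \ref{Prop Betti} with $a=h$ cannot produce the exponent $2^{j+h}-2$ on the nose.

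A smaller point: you anticipate needing the Gr\"obner-basis/$\Lambda$-construction machinery of Sections \ref{Section Buchberger} and \ref{Section Lambda} to control $\beta_{1,j}(J_1+J_2)$. That is unnecessary. Once one has $\beta_{0,j}(L)\leq\beta_{0,j}(J)+\beta_{0,j}(I_h)$ (trivially, since generators of a sum come from generators of the summands), Proposition \ref{Prop Betti} — which is just the Taylor complex on a monomial initial ideal — already bounds $\beta_{1,\leq 1+j}(L)$ and all higher Betti numbers of $L$. The $\Lambda$-construction is only needed to establish Theorem \ref{THM generators} in the first place.
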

\begin{proof}
For any $1 \leq \ell \leq h$, we let $I_\ell$ be the intersection of all minimal primes of $I$ of height $\ell$, so that $I = \bigcap_{\ell=1}^h I_\ell$. If we let $f$ be a generator of $I_1$, then since $I$ is radical we must have $I=f I'$, with $I'= \bigcap_{\ell=2}^h I_\ell$. It follows that $\beta_{i,j}(I) = \beta_{i,j-\deg(f)}(I')$. Since our upper bound is an increasing function in the variable $j$, it suffices to show the upper bound for the graded Betti numbers of $I'$. Thus, without loss of generality, we assume that $f=1$, so that $I=\bigcap_{\ell=2}^h I_\ell$.

We proceed by induction on $h \geq 2$. If $h=2$, then Theorem \ref{THM generators} yields $\beta_{0,j}(I) \leq h^{2^{j+1}-3}$ and Proposition \ref{Prop Betti} gives that
\[
\beta_{i,\leq i+j}(I) \leq {h^{2^{j+2}-3} \choose i+1} \leq {h^{2^{j+h}-2}-1 \choose i+1} \cdot \frac{h^{2^{j+h}-2}}{i+2} = {h^{2^{j+h}-2} \choose i+2}.
\]

Assume $h \geq 3$, and let $J = \bigcap_{\ell=2}^{h-1} I_\ell$ and $L=J+I_h$. Consider the graded short exact sequence
\[
\xymatrix{
0 \ar[r] & S/I \ar[r] & S/J \oplus S/I_h \ar[r] & S/L \ar[r] & 0.
}
\]
Applying the functor $- \otimes_S \kk$ and counting dimensions of the graded components of the long exact sequence of $\Tor_\bullet^S(-,\kk)$ we obtain that $\beta_{i, \leq i+j}(I) \leq \beta_{i,\leq i+j}(J) + \beta_{i,\leq i+j}(I_h) + \beta_{i+1, \leq i+j}(L)$.

By induction, we have that $\beta_{i, \leq i+j}(J) \leq {(h-1)^{2^{j+h-1}-2} \choose i+2}$, and in particular we have that $\beta_{0, j}(J) \leq {(h-1)^{2^{j+h-1}-2} \choose 2} \leq h^{2^{j+h}-4}$. By Theorem \ref{THM generators} we have that $\beta_{0, j}(I_h) \leq h^{2^{j+1}-3}$, and therefore $\beta_{0,  j}(L) \leq \beta_{0,j}(J) + \beta_{0,j}(I_h) \leq h^{2^{j+h}-3}$ for every $j$. By Proposition \ref{Prop Betti} we get $\beta_{i,\leq i+j}(I_h) \leq {h^{2^{j+2}-3} \choose i+1}$ and $\beta_{i+1,\leq i+j}(L) \leq {h^{2^{j+h}-3} \choose i+2}$. Putting all these estimates together, some easy calculations show that
\begin{align*}
\beta_{i, \leq i+j}(I) & \leq {(h-1)^{2^{j+h-1}-2} \choose i+2}  + {h^{2^{j+2}-3} \choose i+1} + {h^{2^{j+h}-3} \choose i+2} \\
& \leq {h^{2^{j+h}-3}-1 \choose i+2}  +{h^{2^{j+h}-3} + 1 \choose i+2}  \leq {h^{2^{j+h}-2} \choose i+2}. \qedhere
\end{align*}

\end{proof}

\bibliographystyle{alpha}
\bibliography{References}

\newcommand{\etalchar}[1]{$^{#1}$}
\begin{thebibliography}{CCM{\etalchar{+}}19}

\bibitem[AH20]{AH}
Tigran Ananyan and Melvin Hochster.
\newblock Small subalgebras of polynomial rings and {S}tillman's {C}onjecture.
\newblock {\em J. Amer. Math. Soc.}, 33(1):291--309, 2020.

\bibitem[CCM{\etalchar{+}}19]{CCMcPV}
Giulio Caviglia, Marc Chardin, Jason McCullough, Irena Peeva, and Matteo
  Varbaro.
\newblock Regularity of prime ideals.
\newblock {\em Math. Z.}, 291(1-2):421--435, 2019.

\bibitem[CV15]{CV}
Giulio Caviglia and Matteo Varbaro.
\newblock Componentwise regularity ({I}).
\newblock {\em J. Pure Appl. Algebra}, 219(1):175--182, 2015.

\bibitem[DMV19]{DMV}
Hailong Dao, Linquan Ma, and Matteo Varbaro.
\newblock Regularity, singularities and $h$-vector of graded algebras.
\newblock {\em arXiv: 1901.01116}, 2019.

\bibitem[EG84]{EG}
David Eisenbud and Shiro Goto.
\newblock Linear free resolutions and minimal multiplicity.
\newblock {\em J. Algebra}, 88(1):89--133, 1984.

\bibitem[Eis95]{Eisenbud}
David Eisenbud.
\newblock {\em Commutative algebra}, volume 150 of {\em Graduate Texts in
  Mathematics}.
\newblock Springer-Verlag, New York, 1995.
\newblock With a view toward algebraic geometry.

\bibitem[ESS19]{ESS}
Daniel Erman, Steven~V. Sam, and Andrew Snowden.
\newblock Big polynomial rings and {S}tillman's conjecture.
\newblock {\em Invent. Math.}, 218(2):413--439, 2019.

\bibitem[MP18]{McP}
Jason McCullough and Irena Peeva.
\newblock Counterexamples to the {E}isenbud-{G}oto regularity conjecture.
\newblock {\em J. Amer. Math. Soc.}, 31(2):473--496, 2018.

\bibitem[MS05]{MS}
Ezra Miller and Bernd Sturmfels.
\newblock {\em Combinatorial commutative algebra}, volume 227 of {\em Graduate
  Texts in Mathematics}.
\newblock Springer-Verlag, New York, 2005.

\bibitem[PS09]{PS}
Irena Peeva and Mike Stillman.
\newblock Open problems on syzygies and {H}ilbert functions.
\newblock {\em J. Commut. Algebra}, 1(1):159--195, 2009.

\end{thebibliography}
\end{document}